\documentclass[11pt,a4paper]{amsart}
\usepackage[T1]{fontenc}
\usepackage{mathrsfs}
\usepackage{syntonly}
\usepackage{amsmath}
\usepackage{amsthm}
\usepackage{amsfonts}
\usepackage{amssymb}
\usepackage{latexsym}
\usepackage{amscd,amssymb,amsopn,amsmath,amsthm,graphics,amsfonts,mathrsfs,accents,enumerate,verbatim,calc}
\usepackage{mathtools}
\usepackage[colorlinks=true,linkcolor=red,citecolor=blue]{hyperref}
\usepackage{booktabs,graphicx}
\usepackage[all]{xy}
\usepackage{enumitem}
\allowdisplaybreaks[4] \footskip=15pt
\renewcommand{\uppercasenonmath}[1]{}

\numberwithin{equation}{section} \theoremstyle{plain}
\newtheorem{theorem}{Theorem}[section]
\newtheorem{corollary}[theorem]{Corollary}
\newtheorem{lemma}[theorem]{Lemma}

\theoremstyle{definition}
\newtheorem{definition}[theorem]{Definition}
\newtheorem{example}[theorem]{Example}
\newtheorem{construction}[theorem]{Construction}

\newtheorem{remark}[theorem]{Remark}

\newtheorem*{ack*}{ACKNOWLEDGEMENTS}

\newcommand{\Cpx}[4]{\mathbf{C}(#1)^{[#2,#3]}_{\mathcal{#4}}}

\newcommand{\Pres}[2]{\Cpx{A}{-#1}{0}{#2}}
\newcommand{\Copres}[2]{\Cpx{A}{0}{#1}{#2}}

\newcommand{\PresP}[1]{\Pres{#1}{P,E}}
\newcommand{\PresFP}[1]{\Pres{#1}{FP,E}}
\newcommand{\CopresE}[1]{\Copres{#1}{I,E}}

\newcommand{\oo}{\otimes}
\newcommand{\pf}{\noindent\begin {proof}}

\newcommand{\s}{\stackrel}

\newcommand{\Mod}{\mbox{\rm Mod}}

\newcommand{\Ext}{{\rm Ext}}
\newcommand{\Hom}{{\rm Hom}}

\newcommand{\pp}{\mathfrak{p}}

\newcommand{\coker}{{\rm coker}}

\newcommand{\Ker}{{\mathrm{Ker}}}

\title{On silting complexes associated to $n$-silting modules}

\author{Michal Hrbek}
\address[M. Hrbek]{Institute of Mathematics, Czech Academy of Sciences, \v{Z}itn\'{a} 25, 115 67 Prague, Czech Republic}
\email{hrbek@math.cas.cz}

\author{Jiangsheng Hu}
 \address[J. Hu]{School of Mathematics, Hangzhou Normal University, Hangzhou 311121, China}
 \email{hujs@hznu.edu.cn}

\author{Rongmin Zhu}
\address[R. Zhu]{School of Mathematical Sciences, Huaqiao University, Quanzhou 362021, P. R. China}
\email{rongminzhu@hotmail.com}

\keywords{$n$-silting module, finite type, silting complex, tilting complex, commutative noetherian ring}
\subjclass[2020]{16E65, 18G25, 18G20, 18G80}

\begin{document}

\begin{abstract}
We show that any $(n+1)$-term silting complex whose intermediate cohomology vanishes gives rise to an $n$-silting module, as recently introduced by Mao. Specializing to commutative noetherian rings, we show that this assignment induces a bijection on the respective equivalence classes. Furthermore, we prove in the same setting that the $n$-silting modules always correspond to a tilting complex, that is, the associated t-structure is of derived type. We use this to exhibit new examples of tilting complexes in the setting of Commutative Algebra and also to show that the finite type property for $n$-silting modules, as formulated by Mao, can in general fail.
\end{abstract}

\maketitle

\section*{Introduction}

Tilting modules originated as a fundamental notion in Representation Theory.
The classical tilting theory started in the context of finitely generated modules over Artin algebras due to Happel and Ringel \cite{ha82}. Beginning with Miyashita \cite{mi86}
and in turn Colby and Fuller \cite{fu90}, finitely generated tilting modules over arbitrary
rings were studied. After that, generalizations to infinitely generated tilting and
cotilting modules over arbitrary associative rings were investigated first by Colpi
and Trlifaj \cite{tr95}, Colpi, Tonolo and Trlifaj \cite{tr97}, and Angeleri H\"{u}gel and Coelho \cite{li01}.

Later, the notion of silting modules was introduced by Angeleri H\"{u}gel, Marks, and Vit\'{o}ria \cite{li16} as a common generalization of tilting modules over an arbitrary ring and the support $\tau$-tilting modules over a finite dimensional algebra, as introduced by Adachi, Iyama and Reiten \cite{iya14}.
Silting modules are in bijection with 2-term silting complexes and with certain t-structures and co-t-structures in the derived module category.
The dual notion, i.e., cosilting modules, as a generalization of cotilting modules,
were studied by Breaz and Pop \cite{br17} and Zhang and Wei \cite{zw17}. Recently, Mao \cite{mao22} introduced the concepts of
$n$-silting and $n$-cosilting modules, which
not only generalize $n$-tilting and $n$-cotilting modules,
but also generalize silting and cosilting modules.

In \cite[4.8]{mao22}, it was asked if, for any $n$-silting module with respect to a projective $n$-presentation $\Sigma$, the complex $\Sigma$ constitutes an $(n+1)$-term silting complex, generalizing the known bijection from the $n=1$ case. We give a partial answer to this in Theorem~\ref{thm1}, by first showing that any $(n+1)$-term silting complex $\Sigma$ with vanishing intermediate cohomology (i.e., $H^i(\Sigma)=0$ for all $-n<i<0$) is a projective $n$-presentation of an $n$-silting module. Furthermore, we show in Theorems~\ref{thm1} and \ref{thm1co} that the converse is also true if $A$ satisfies certain condition, which is always satisfied in case $A$ is left hereditary or commutative noetherian. We also establish the dual claim for $n$-cosilting modules. We also give a characterization in Lemma~\ref{equiv-conditions} of the situation in which an $n$-silting module gives rise to a silting complex in terms of certain condition on the kernel of the induced cotorsion pair in a suitable category of representations over the base ring, based on the work of Marks and Vitória \cite{ma18}.

Finally, we specialize to the setting of Commutative Algebra, where the base ring $A$ is a commutative noetherian ring, and provide more substantial motivation for studying $n$-(co)silting modules by both showing that the corresponding (co)silting complexes enjoy certain nice properties and by providing explicit constructions of $n$-(co)silting modules which are neither $n$-(co)tilting modules nor $1$-(co)silting modules. More precisely, we show in Theorem~\ref{cotilting-complex} that the (co)silting complex corresponding to any $n$-(co)silting module is, in this setting, always a (co)tilting complex, that is, the associated t-structure is of derived type. This generalizes the previously known case for $n=1$ of Pavon and Vitória \cite[Corollary 5.12]{pv21}, see also \cite[Theorem 6.18]{hns24} to arbitrary $n$. Note that, in general, determining whether a silting complex is a tilting complex is a subtle problem, as demonstrated in this setting e.g. in \cite[\S 7]{hns24}. We then proceed to construct the promised examples over local rings which fail to be Cohen--Macaulay, but enjoy vanishing of intermediate local cohomology, these were classically constructed by Sharp \cite{sha75}. In particular, the cosilting complex constructed in Example~\ref{E:e2} is neither a cotilting module nor is the induced t-structure restrictable in the sense of \cite{pv21}, and thus it demonstrates Theorem~\ref{cotilting-complex} as a new way to establish the derived type.

The finite type property was established for $n$-tilting modules in celebrated results of Bazzoni-Herbera \cite{bh07} and Bazzoni-Šťovíček \cite{sto07}. The generalization for silting complexes was obtained by Marks and Vitória \cite{ma18}. In \cite{mao22}, Mao formulated an analogous finite type property for $n$-silting modules in terms of $n$-projective presentations consisting of finitely generated projective modules, and asked if this property is enjoyed by all $n$-silting modules. We show in \S\ref{counterexample} that our construction in the setting of commutative noetherian rings provides a counterexample, that is, an $n$-silting module which is not of finite type in this sense.

\renewcommand{\thetheorem}{\arabic{section}.\arabic{theorem}}
\bigskip
\section{Preliminaries}
\subsection{Basic notation}
Throughout, let $A$ be an associative ring with unit.
Denote by $\mathrm{Mod}(A)$ the category of left $A$-modules.
If not stated otherwise, by an $A$-module, we always mean a left $A$-module
and right $A$-modules are considered as left $A^{op}$-modules, where $A^{op}$ is the opposite ring of $A$.
Denote by $\mathrm{Proj}(A)$ (resp. $\mathrm{proj}(A)$) the category of  (resp. finitely generated) projective $A$-modules, by $\mathrm{Inj}(A)$ the category of injective $A$-modules.
Denote by $\mathrm{pd}_{A}(M)$ (resp. $\mathrm{id}_A(M)$) the projective (resp. injective) dimension of $M$.
For a right $A$-module $M$, we denote by $M^{+}$ the character module
$\Hom_{\mathbb{Z}}(M, \mathbb{Q}/\mathbb{Z}) \in \mathrm{Mod}(A)$.
Note that $A^{+}$, the character module of the right regular module $A_{A}$, is an injective cogenerator of $\mathrm{Mod}(A)$.

Denote by $\mathbf{C}(A)$ and $\mathbf{D}(A)$ the category of all cochain complexes and the (unbounded) derived category of cochain complexes in $\mathrm{Mod}(A)$, respectively. Throughout the paper, we will consider several useful subcategories of $\mathbf{C}(A)$, which we will sometimes consider naturally also as subcategories of $\mathbf{D}(A)$: 

\begin{itemize}
\item For a couple of integers $a \leq b$, we let $\Cpx{A}{a}{b}{}$ denote the subcategory of $\mathbf{C}(A)$ consisting of complexes concentrated in degrees $a,a+1,\ldots,b$, that is, complexes of the form 
$$X = (\cdots \to 0 \to  0 \to X^{a} \to X^{a+1} \to \cdots \to X^b \to 0 \to 0 \to \cdots).$$
\item $\Cpx{A}{a}{b}{P}$ consists of those cochain complexes $X \in \Cpx{A}{a}{b}{}$ 
whose components $X_i$ are projective $A$-modules.
\item $\Cpx{A}{a}{b}{FP}$ consists of those cochain complexes $X \in \Cpx{A}{a}{b}{}$ whose components $X_i$ are finitely generated projective $A$-modules.
\item $\Cpx{A}{a}{b}{I}$ consists of those cochain complexes $X \in \Cpx{A}{a}{b}{}$ whose components $X_i$ are injective $A$-modules.
\item $\Cpx{A}{a}{b}{E}$ consists of those cochain complexes $X \in \Cpx{A}{a}{b}{}$ whose \textit{intermediate cohomology vanishes}, that is, such that $H^i(X) = 0$ for all $a<i<b$.
\item Finally, we shall use the natural shorthand notation $\Cpx{A}{a}{b}{P,E} = \Cpx{A}{a}{b}{P} \cap \Cpx{A}{a}{b}{E}$ and similarly we define $\Cpx{A}{a}{b}{I,E}$ and $\Cpx{A}{a}{b}{FP,E}$.
\end{itemize}   

Note that the objects of $\PresP{n}$ are nothing else than the projective $n$-presentations of $A$-modules, that is, $\Sigma \in \PresP{n}$ is the projective $n$-presentation of the $A$-module $T=H^0(\Sigma)$. Similarly, the objects of $\CopresE{n}$ are injective $n$-copresentations of $A$-modules, that is, $\Omega \in \CopresE{n}$ is the injective $n$-copresentation of the $A$-module $C=H^0(\Omega)$.

For an object $X \in \mathbf{D}(A)$, we denote by $\mathrm{Add}(X)$ the class of all objects which are isomorphic to direct summands
of direct sums of copies of $X$ and for an $A$-module $M$ denote by $\mathrm{Gen}(M)$ the class of all $M$-generated modules, i.e. all epimorphic
images of modules in $\mathrm{Add}(M)$. Dually, we can define $\mathrm{Prod}(M)$ and $\mathrm{Cogen}(M)$. For an $A$-module $M$, denote by $\mathrm{Pres}^{n}(M)$ the class
of $A$-modules $N$ for which there is an exact sequence of the form
$$C_{n}\rightarrow C_{n-1}\rightarrow \cdots \rightarrow C_{0}\rightarrow N \rightarrow 0$$
with each $C_{i} \in \mathrm{Add}(M)$. In fact, $\mathrm{Gen}(M) = \mathrm{Pres}^{0}(M)$. For an $A$-module $N$, denote by $\mathrm{Copres}^{n}(N)$ the class of  $A$-modules $M$ for which there is an exact sequence of the form
$0\rightarrow M \rightarrow C^{0} \rightarrow\cdots\rightarrow C^{n-1} \rightarrow C^{n}$ with each $C^{i} \in \mathrm{Prod}(N)$.

Given a class $\mathcal{C}$ of $A$-modules and an $A$-module $M$, set
$$\mathcal{C}^{\perp_{1}}= \{N\in \mathrm{Mod}(A)\mid \Ext^{1}_{A}(C, N) = 0
~\forall  C\in \mathcal{C}\}, $$
$$^{\perp_{1}}\mathcal{C}= \{N\in \mathrm{Mod}(A)\mid \Ext^{1}_{A}(N,C) = 0
~\forall  C\in \mathcal{C}\},$$
$$M^{\perp_{\infty}} = \{N \in \mathrm{Mod}(A) \mid \Ext_A^i(M,N) = 0 ~\forall i>0\},$$
$${}^{\perp_{\infty}}M = \{N \in \mathrm{Mod}(A) \mid \Ext_A^i(N,M) = 0 ~\forall i>0\}.$$
Recall that a pair $(\mathcal{A}, \mathcal{B})$ of full subcategories in $\mathrm{Mod}(A)$ is called a
\emph{cotorsion pair} if $\mathcal{A}^{\perp_{1}}= B$ and $\mathcal{B} = {^{\perp_{1}}\mathcal{A}}$.

Given a class $\mathcal{C}$ of $A$-modules and an $A$-module $M$. Recall that a homomorphism $f\in \Hom_{A}(C, M)$ with $C\in\mathcal{C}$  is said to be a \emph{special $\mathcal{C}$-precover} of  $M$ if $f$ is surjective and $\ker(f)\in \mathcal{C}^{\perp_{1}}$. Dually, a homomorphism $g\in \Hom_{A}(M, C)$ with $C\in\mathcal{C}$ is said to be a \emph{resp. special $\mathcal{C}$-preenvelope} of $M$ if $g$ is injective and $\coker(g)\in{^{\perp_{1}}\mathcal{C}}$.
The  class $\mathcal{C}$ is called \textit{special precovering} (resp. \textit{special preenveloping}) if every $A$-module has a special  $\mathcal{C}$-precover  (special $\mathcal{C}$-preenvelope). A cotorsion pair $(\mathcal{A}, \mathcal{B})$ is said to be \emph{complete} if  $\mathcal{A} $ is special precovering and $\mathcal{B} $ is special preenveloping.

Given an object $X \in \mathbf{D}(A)$, we define the following subcategories of the derived category:
$$X^{\perp_{>0}} = \{Y \in \mathbf{D}(A) \mid \Hom_{\mathbf{D}(A)}(X,Y[i]) = 0~\forall i>0\},$$
$$X^{\perp_{\leq 0}} = \{Y \in \mathbf{D}(A) \mid \Hom_{\mathbf{D}(A)}(X,Y[i]) = 0~\forall i\leq 0\},$$
$${}^{\perp_{>0}}X = \{Y \in \mathbf{D}(A) \mid \Hom_{\mathbf{D}(A)}(Y,X[i]) = 0~\forall i>0\},$$
$${}^{\perp_{\leq 0}}X = \{Y \in \mathbf{D}(A) \mid \Hom_{\mathbf{D}(A)}(Y,X[i]) = 0~\forall i\leq 0\},$$
$$X^{\perp_{\mathbb{Z}}} = X^{\perp_{>0}} \cap X^{\perp_{\leq 0}} \text{, and } {}^{\perp_{\mathbb{Z}}}X = {}^{\perp_{>0}}X \cap {}^{\perp_{\leq 0}}X.$$

Recall that a pair $\mathcal{(U, V)}$ of subcategories of $\mathbf{D}(A)$ is said to be a
\emph{torsion pair} provided that the following conditions hold:
\begin{itemize}
\item [$(i)$]
    Both $\mathcal{U}$ and $\mathcal{V}$ are closed under direct summands.
\item [$(ii)$]
    $\mathcal{U}^{\perp_{0}} = \mathcal{V}$ and $\mathcal{U} = {^{\perp_{0}}} \mathcal{V}$.
\item [$(iii)$]
    Every object $X \in \mathbf{D}(R)$ lies in a triangle
    $$U \rightarrow X \rightarrow V \rightarrow U[1]$$
    in $\mathbf{D}(R)$ with $U \in \mathcal{U}$ and $V \in \mathcal{V}$.
\end{itemize}
Recall that a torsion pair $\mathcal{(U, V)}$ is called a \emph{t-structure}
(resp. \emph{co-t-structure}) if in addition
$\mathcal{U}$ is closed under positive (resp. negative) shifts, that is, $\mathcal{U}[1] \subseteq \mathcal{U}$ (resp. $\mathcal{U}[-1] \subseteq \mathcal{U}$). Recall that the derived category $\mathbf{D}(A)$ always admits the \textit{standard t-structure} $(\mathbf{D}^{\leq 0},\mathbf{D}^{>0})$ given by the standard cochain cohomology, that is:
$$\mathbf{D}^{\leq 0} = \{X \in \mathbf{D}(A) \mid H^i(X)=0 ~\forall i>0\},$$
$$\mathbf{D}^{> 0} = \{X \in \mathbf{D}(A) \mid H^i(X)=0 ~\forall i\leq 0\}.$$

\subsection{$n$-tilting modules and $n$-cotilting modules}

Recall that an $A$-module $T$ is \emph{$n$-tilting} \cite{ba04, tr12} if it satisfies the following three conditions:
\begin{itemize}
  \item [$(T1)$] $\mathrm{pd}_{A}(T) \leq n$;
  \item [$(T2)$] $T^{(\kappa)} \in T^{\perp_\infty}$ for any cardinal $\kappa$;
  \item [$(T3)$] there exist an integer $r\geq 0$ and an exact sequence
  $$0\rightarrow A \rightarrow  T^{0}\rightarrow T^{1}\rightarrow\cdots\rightarrow T^{r}\rightarrow 0$$
   with each $T^{i} \in \mathrm{Add}(T)$ for any $0 \leq i\leq r$.
\end{itemize}
If an $n$-tilting module $T$ satisfies the following stronger version of
$(T1)$: $T$ has a projective resolution $0\rightarrow P^{n}\rightarrow \cdots \rightarrow P^{0}\rightarrow T\rightarrow 0$ with each $P^{i}$ finitely generated, then $T$ is called a \emph{classical $n$-tilting module}.
By \cite[Theorem 3.11]{ba04}, $T$ is $n$-tilting if and only if $\mathrm{Pres}^{n-1}(T)=T^{\perp_{\infty}}$.

Recall that $C$ is an \emph{$n$-cotilting module}  \cite{ba04, tr12} if it satisfies the following three conditions:
\begin{itemize}
  \item [$(C1)$] $\mathrm{id}_{A}(C) \leq n$;
  \item [$(C2)$] $C^{\kappa} \in {}^{\perp_\infty}C$ for any cardinal $\kappa$;
  \item [$(C3)$] there exist an integer $r\geq 0$ and an exact sequence
  $$0\rightarrow C^{r}\rightarrow  C^{r-1}\rightarrow\cdots\rightarrow C^{0}\rightarrow A^{+}\rightarrow 0$$
   with each $C^{i} \in \mathrm{Prod}(C)$ for any $0 \leq i\leq r$.
\end{itemize}
By \cite[Theorem 3.11]{ba04}, $C$ is $n$-cotilting if and only if $\mathrm{Copres}^{n-1}(C)={^{\perp_{\infty}}C}$. 

Any $n$-tilting module $T$ gives rise to a \textit{tilting cotorsion pair} $\mathfrak{C} = ({}^{\perp_\infty}(T^{\perp_\infty}),T^{\perp_\infty})$ whose \textit{kernel} $\Ker(\mathfrak{C}) = {}^{\perp_\infty}(T^{\perp_\infty}) \cap T^{\perp_\infty}$ is equal to $\mathrm{Add}(T)$. Dually, an $n$-cotilting module $C$ gives rise to a \textit{cotilting cotorsion pair} $\mathfrak{C} = ({}^{\perp_\infty}C,({}^{\perp_\infty}C)^{\perp_\infty})$ such that $\Ker(\mathfrak{C}) = \mathrm{Prod}(C)$.

\subsection{\bf Silting and cosilting complexes}
Let $\mathbf{K}^{b}(\mathrm{Proj}(A))$ and $\mathbf{K}^{b}(\mathrm{Inj}(A))$ denote the homotopy category of bounded complex of complexes with projective (resp. injective) components, both viewed naturally as subcategories of $\mathbf{D}(A)$. Recall that a complex $X$ in $\mathbf{D}(A)$ is said to be
\begin{itemize}
  \item \emph{silting} if $\Hom_{\mathbf{D}(A)}(X, X^{(J)}[i]) = 0$ for all $i > 0$ and all sets $J$,
  and $\mathrm{thick}(\mathrm{Add}(X)) =\mathbf{K}^{b}(\mathrm{Proj}(A))$; it is moreover said to be
  \emph{$(n+1)$-term silting} if $X$ lies in $\PresP{n}(A)$.
  \item  \emph{cosilting} if $\Hom_{\mathbf{D}(A)}(X^{J} , X[i]) = 0$ for all $i > 0$ and all sets $J$,
  and $\mathrm{thick}(\mathrm{Prod}(X)) =\mathbf{K}^{b}(\mathrm{Inj}(A))$; it is moreover said to be
  \emph{$(n+1)$-term cosilting} if $X$ lies in $\CopresE{n}(A)$.
\end{itemize}

Recall from \cite{pv18} that any silting complex $X$ comes attached with a t-structure in $\mathbf{D}(A)$ of the form $(X^{\perp_{>0}},X^{\perp_{\leq 0}})$, called the associated \textit{silting t-structure}. We say that two silting complexes are equivalent if they give rise to the same t-structure. It turns out that two silting complexes $X,X'$ are equivalent precisely if $\mathrm{Add}(X)=\mathrm{Add}(X')$, see \cite[Lemma 4.5]{pv18}. Similarly, any cosilting complex gives rise to a \textit{cosilting t-structure} $({}^{\perp_{>0}}X,{}^{\perp_{\leq 0}}X)$, leading to the dual notion of equivalence of cosilting complexes.

For the purposes of the last section, we also recall the notions of tilting and cotilting complexes. A silting complex $X$ (resp. a cosilting complex $X$) is called a \textit{tilting complex} (resp. a \textit{cotilting complex}) provided that $X \in X^{\perp_{< 0}} = (X^{\perp_{\leq 0}}) [1]$ (resp. $X \in {}^{\perp_{< 0}}X = ({}^{\perp_{\leq 0}}X)[-1]$). These notions are important because they characterize precisely when the silting t-structure (resp. the cosilting t-structure) is of derived type. In fact, in this situation, the derived category $\mathbf{D}(A)$ is triangle equivalent to the heart of the t-structure, see \cite{pv18} and \cite{vir18}.

It is a standard fact that a module $M \in \mathrm{Mod}(A)$ is $n$-(co)tilting module if and only if it is an $(n+1)$-term (co)silting complex as an object of the derived category $\mathbf{D}(A)$, and then it is automatically a (co)tilting complex. In general however, determining whether a (co)silting complex is a (co)tilting complex is a subtle problem, see \cite{hns24}.

Finally, the following characterization of silting and cosilting complexes will be useful for us.

\begin{lemma}\label{silting-complex-generate}\cite[Theorem 3.11]{bhm25}
  A complex $X \in \mathbf{K}^{b}(\mathrm{Proj}(A))$ is silting if and only if the following two conditions hold:
  \begin{enumerate}
    \item[$(i)$] $\mathrm{Add}(X) \subseteq X^{\perp_{>0}}$,
    \item[$(ii)$] $X^{\perp_{\mathbb{Z}}} = 0$.
  \end{enumerate}
  A complex $X \in \mathbf{K}^{b}(\mathrm{Inj}(A))$ is cosilting if and only if the following two conditions hold:
  \begin{enumerate}
    \item[$(i)$] $\mathrm{Prod}(X) \subseteq {}^{\perp_{>0}}X$,
    \item[$(ii)$] $  {}^{\perp_{\mathbb{Z}}}X = 0$.
  \end{enumerate}
\end{lemma}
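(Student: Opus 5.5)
We first reduce the first part of Lemma~\ref{silting-complex-generate} to a statement about generation. Since $X^{\perp_{>0}}$ is closed under direct summands, condition (i) is equivalent to $X^{(J)}\in X^{\perp_{>0}}$ for every set $J$, which is exactly the vanishing condition in the definition of a silting complex. So, assuming (i), it remains to show that $\mathrm{thick}(\mathrm{Add}(X))=\mathbf{K}^b(\mathrm{Proj}(A))$ holds if and only if $X^{\perp_{\mathbb{Z}}}=0$. The cosilting statement should follow by the dual argument, replacing coproducts by products, $\mathrm{Add}$ by $\mathrm{Prod}$, and $A$ by the injective cogenerator $A^+$. We only sketch the silting case.

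\emph{Generation implies (ii).} Let $Y\in X^{\perp_{\mathbb{Z}}}$. Since $\Hom_{\mathbf{D}(A)}(X^{(J)},Y[i])\cong\Hom_{\mathbf{D}(A)}(X,Y[i])^J$, we get $\mathrm{Add}(X)\subseteq{}^{\perp_{\mathbb{Z}}}Y$. This holds even though $X$ need not be compact. The class ${}^{\perp_{\mathbb{Z}}}Y$ is thick, so it contains $\mathrm{thick}(\mathrm{Add}(X))=\mathbf{K}^b(\mathrm{Proj}(A))\ni A$. Hence $H^i(Y)=\Hom_{\mathbf{D}(A)}(A,Y[i])=0$ for all $i$, and therefore $Y=0$.

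\emph{(i) and (ii) imply generation.} This is the main direction, and we proceed in three steps.

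First, since $X$ is a bounded complex of projectives, the pair $(\mathcal U,\mathcal V)=(X^{\perp_{>0}},X^{\perp_{\leq 0}})$ is a t-structure. This comes from the standard construction of approximation triangles as homotopy colimits of iterated universal maps $X^{(\Hom(X,-[i]))}[i]\to -$. In this construction, (i) guarantees that $\mathcal U$ is the smallest cocomplete pre-aisle containing $X$.

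Second, we use boundedness. If $X$ is concentrated in degrees $[-n,0]$, then $\mathbf{D}^{\leq -n}\subseteq\mathcal U\subseteq\mathbf{D}^{\leq 0}$. Condition (ii) makes the t-structure nondegenerate in the required sense. So $A$ lies in $\mathcal U$, and $A$ is also left orthogonal to $\mathcal U[n+1]$.

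Third, we peel off $A$ with a finite filtration. Put $Z_0=A$. For each $k$, we take the universal map $X_k:=X^{(\Hom(X,Z_k))}\to Z_k$, with cone $Z_{k+1}$. By (i), $Z_{k+1}$ stays in $\mathcal U[-?]$-type bounds, and it becomes left orthogonal to one shift less of $\mathcal U$. After finitely many steps we reach an object $W$ in the co-heart $\mathcal U\cap{}^{\perp_0}(\mathcal U[1])$. We then show that $W\in\mathrm{Add}(X)$. To do this, take the universal map $X^{(I)}\to W$ and complete it to a triangle $X^{(I)}\to W\to C\to$. Condition (i) and the choice of $I$ give $\Hom(X,C[i])=0$ for $i\geq 0$. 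The hypothesis $W\in{}^{\perp_0}(\mathcal U[1])$ forces the map $W\to C$ to vanish, so the triangle splits and $W$ is a summand of $X^{(I)}$. This filtration exhibits $A$ as an iterated extension of objects in $\mathrm{Add}(X)[i]$, so $A\in\mathrm{thick}(\mathrm{Add}(X))$. Since $\mathrm{thick}(\mathrm{Proj}(A))=\mathbf{K}^b(\mathrm{Proj}(A))$, and $\mathrm{Proj}(A)\subseteq\mathrm{thick}(\mathrm{Add}(X))$ once $A$ is (because that class is closed under coproducts of bounded-length objects coming from $\mathrm{Add}$), generation follows.

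The main obstacle is the co-heart step: proving $W\in\mathrm{Add}(X)$, and in particular verifying that the cone $C$ lies in $\mathcal U[1]$. This is exactly where (ii) is needed. I would first try to show $C\in X^{\perp_{\mathbb Z}}$ after truncation, using that $\Hom(X,C[i])=0$ for $i\leq 0$ by universality and for $i>0$ by (i). Condition (ii) would then kill the relevant truncation of $C$.
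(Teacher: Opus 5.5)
The paper does not prove this lemma; it quotes it from \cite[Theorem 3.11]{bhm25}. So your argument has to stand on its own, and there is a genuine gap.

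\textbf{What is fine.} Your direction ``silting $\Rightarrow$ (i),(ii)'' is correct. So is the final splitting of the triangle $X^{(I)}\to W\to C\to$. Note, though, that $C\in\mathcal U[1]$ is literally the statement $\Hom_{\mathbf{D}(A)}(X,C[i])=0$ for all $i\geq 0$. You derived this from (i) and universality alone, so (ii) plays no role there, contrary to your closing remark.

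\textbf{The gap: Steps 1--2.} You claim that $(X^{\perp_{>0}},X^{\perp_{\le 0}})$ is a t-structure, and that (i) makes $X^{\perp_{>0}}$ the smallest cocomplete pre-aisle containing $X$. This is false. Both $X=0$ and $X=P$ a non-generating projective satisfy (i), yet in each case $X^{\perp_{>0}}\not\subseteq\mathbf{D}^{\le 0}$ and $X^{\perp_{>0}}\cap X^{\perp_{\le 0}}\neq 0$.

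The homotopy-colimit construction you invoke also needs $\Hom_{\mathbf{D}(A)}(X,-)$ to commute with homotopy colimits, that is, compactness. A bounded complex of infinitely generated projectives does not have this.

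Step 2 then asserts $\mathcal U\subseteq\mathbf{D}^{\le 0}$ without proof, and ``(ii) makes the t-structure nondegenerate'' does not deliver it. Yet, given (i), this inclusion already implies generation: run your Step 3 from $P[n]$ for a projective $P$. So Steps 1--2 contain the entire content of the hard implication, and (ii) is never actually used.

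\textbf{What would close it.} You need an argument that turns (ii) into $X^{\perp_{>0}}\subseteq\mathbf{D}^{\le 0}$. One route:
\begin{enumerate}
\item Produce a t-structure $(\mathcal U',X^{\perp_{\le 0}})$ with $\mathcal U'\subseteq X^{\perp_{>0}}$. This is where the non-compactness of $X$ must be dealt with honestly, both for the existence of approximation triangles and for the containment.
\item For $Y\in X^{\perp_{>0}}$, take its approximation triangle $U\to Y\to V\to U[1]$. The long exact sequence gives $V\in X^{\perp_{>0}}\cap X^{\perp_{\le 0}}=X^{\perp_{\mathbb{Z}}}$.
\item This is where (ii) enters: it forces $V=0$. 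Since $\mathbf{D}^{>0}\subseteq X^{\perp_{\le 0}}$, you get $X^{\perp_{>0}}=\mathcal U'={}^{\perp_0}(X^{\perp_{\le 0}})\subseteq{}^{\perp_0}(\mathbf{D}^{>0})=\mathbf{D}^{\le 0}$.
\end{enumerate}

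\textbf{Step 3 also needs its shifts corrected.} $A$ need not lie in $\mathcal U$: for $X=A[1]$ one gets $\mathcal U=\mathbf{D}^{\le -1}$. Start instead from $P[n]\in\mathbf{D}^{\le -n}\subseteq\mathcal U$, which lies in ${}^{\perp_0}(\mathcal U[n+1])$ once $\mathcal U\subseteq\mathbf{D}^{\le 0}$ is known. At each stage replace the cone by its shift $[-1]$; this lands back in $\mathcal U$ and is left orthogonal to one fewer shift of $\mathcal U$. After $n$ steps you reach the co-heart, where your splitting argument applies. Doing this for every projective $P$ also removes the hand-waving about coproducts in $\mathrm{thick}(\mathrm{Add}(X))$.

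\textbf{The cosilting half.} It is not a formal dual. The existence of the t-structure cogenerated by $X$ needs its own justification, so the same crux has to be addressed there as well.
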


\subsection{Silting complexes as tilting modules}

In the following, we recall the interpretation of (co)silting complexes as (co)tilting module in a suitable module category as presented in \cite{ma18}. In order to
do this, recall first that the subcategory $\Cpx{A}{a}{b}{}$ is in fact a module category over a suitable ring. Indeed, set $n = b - a$ and let us consider the quiver
 $$Q_{n+1} :\overset{0}{\bullet}\rightarrow \overset{1}{\bullet} \rightarrow \cdots\rightarrow \overset{n-1}{\bullet}\rightarrow \overset{n}{\bullet},$$
 as well as the bound quiver $A$-algebra $AQ_{n+1}/I$, where $I$ is the ideal generated by all paths of length 2. It is standard to check that the functor $\Theta: \mathrm{Mod}(AQ_{n+1}/I) \rightarrow \mathbf{C}(A)$ which sends a bound representation of $Q_{n+1}$ to a chain complex concentrated in degrees $a,a+1,\ldots,b$ induces an equivalence $\mathrm{Mod}(AQ_{n+1}/I) \cong \Cpx{A}{a}{b}{}$. For any symbol $\star \in \{\mathcal{P},\mathcal{I},\mathcal{FP},\mathcal{(P,E)},\mathcal{(I,E)},\mathcal{(FP,E)}\}$, consider the subcategory 
 $$\mathrm{Mod}_{\star}(AQ_{n+1}/I) = \{M \in \mathrm{Mod}(AQ_{n+1}/I) \mid \Theta(M) \in \Cpx{A}{a}{b}{\star}\}.$$
 
 We also implicitly consider the functor $\Psi: \mathrm{Mod}(AQ_{n+1}/I) \rightarrow \mathbf{D}(A)$ which is defined as the composition of $\Theta$ with the canonical functor $\mathbf{C}(A)\rightarrow \mathbf{D}(A)$. 

 We mark down a crucial property of this identification, see \cite[Lemma 2.4]{ma18}. Let $M \in \mathrm{Mod}(AQ_{n+1}/I)$, $P \in \mathrm{Mod}_\mathcal{P}(AQ_{n+1}/I)$, and $I \in \mathrm{Mod}_\mathcal{I}(AQ_{n+1}/I)$, then we have for any $i>0$ the isomorphisms:
  $$\Ext_{AQ_{n+1}/I}^j(P,M) \cong \Hom_{\mathbf{D}(A)}(\Psi(P),\Psi(M)[j]),$$
  $$\Ext_{AQ_{n+1}/I}^j(M,I) \cong \Hom_{\mathbf{D}(A)}(\Psi(M),\Psi(I)[j]).$$

By \cite[Theorem 2.10]{ma18}, we obtain the following bijections.
\begin{theorem}\label{mv-bijection}
  Setting $a=-n$ and $b=0$, the functor $\Psi$ induces a bijection:
   $$\left \{ \begin{tabular}{ccc} \text{ $n$-tilting modules in $\mathrm{Mod}(AQ_{n+1}/I)$} \\
   \text{which belong to $\mathrm{Mod}_{\mathcal{P}}(AQ_{n+1}/I)$} \\ \text{up to equivalence} \end{tabular}\right \}  \xleftrightarrow{1-1}  \left \{ \begin{tabular}{ccc} \text{} \\ \text{$(n+1)$-silting complexes in $\mathbf{D}(A)$}   \\ \text{up to equivalence} \end{tabular}\right \}.$$
   Setting $a=0$ and $b=n$, the functor $\Psi$ induces a bijection:
   $$\left \{ \begin{tabular}{ccc} \text{ $n$-cotilting modules in $\mathrm{Mod}(AQ_{n+1}/I)$} \\ \text{which belong to $\mathrm{Mod}_{\mathcal{I}}(AQ_{n+1}/I)$} \\\text{up to equivalence} \end{tabular}\right \}  \xleftrightarrow{1-1}  \left \{ \begin{tabular}{ccc} \text{} \\ \text{$(n+1)$-cosilting complexes in $\mathbf{D}(A)$}  \\ \text{up to equivalence} \end{tabular}\right \}.$$
\end{theorem}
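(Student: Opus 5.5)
We only need to verify that the statement is the specialization of \cite[Theorem 2.10]{ma18} to our notation. We argue the silting half; the cosilting half is formally dual, with $\mathrm{Mod}_{\mathcal{I}}$, $\mathrm{Prod}$ and the second Ext isomorphism replacing $\mathrm{Mod}_{\mathcal{P}}$, $\mathrm{Add}$ and the first.

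\emph{Setup and well-definedness.} Put $\Lambda = AQ_{n+1}/I$. The projective $\Lambda$-modules correspond under $\Theta$ to complexes in $\Cpx{A}{-n}{0}{}$ built from split pieces $P \xrightarrow{1} P$ and stalks $P$ (with $P$ projective), so they lie in $\mathrm{Mod}_{\mathcal{P}}(\Lambda)$ and are contractible or stalk. Let $T \in \mathrm{Mod}_{\mathcal{P}}(\Lambda)$ be $n$-tilting and set $X = \Psi(T) \in \PresP{n}$. By the isomorphism $\Ext^j_\Lambda(P,M) \cong \Hom_{\mathbf{D}(A)}(\Psi(P),\Psi(M)[j])$ of \cite[Lemma 2.4]{ma18}, condition $(T2)$ for $T$ gives $\Hom_{\mathbf{D}(A)}(X, X^{(J)}[j]) = 0$ for all $j>0$; note that $\Psi$ commutes with coproducts. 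Next, $\Lambda$ has a finite coresolution by objects of $\mathrm{Add}(T)$ by $(T3)$. Applying $\Psi$ turns this short-exact filtration into triangles, which places $\Psi(\Lambda)$, and hence $A$ (a stalk summand of $\Psi(\Lambda)$), in $\mathrm{thick}(\mathrm{Add}(X))$. Therefore $\mathrm{thick}(\mathrm{Add}(X)) = \mathbf{K}^b(\mathrm{Proj}(A))$, and $X$ is silting. Alternatively, Lemma~\ref{silting-complex-generate} applies directly: condition (ii) follows because $Y \in X^{\perp_{\mathbb Z}}$ with $A \in \mathrm{thick}(\mathrm{Add}(X))$ forces $H^i(Y) = 0$.

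\emph{Surjectivity.} Given an $(n+1)$-term silting complex $X$, choose a representative in $\PresP{n}$ and write $X = \Theta(T)$. The Ext isomorphism gives $\Ext^{>0}_\Lambda(T, T^{(J)}) = 0$. For $\mathrm{pd}_\Lambda T \le n$, we use the standard projective resolution of a representation of the linear quiver with projective vertices, which has length at most $n$. The main point is $(T3)$, and this is the main obstacle. Here I would follow \cite{ma18}: use that $(X^{\perp_{>0}}, X^{\perp_{\le 0}})$ is a t-structure, and that $\Psi(\Lambda)$ is a complex of projectives concentrated in $[-n,0]$, hence in ${}^{\perp}$ of suitable shifted aisles. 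Approximating $\Psi(\Lambda)$ by $\mathrm{Add}(X)$ through iterated $\mathrm{Add}(X)$-preenvelopes, with cones computed degreewise, gives $\Lambda \to T^0 \to \cdots \to T^n$ exact in $\mathrm{Mod}(\Lambda)$. The cones must remain in the $\mathrm{Mod}_{\mathcal P}$ world, and this is controlled by the vanishing $\Hom(X, -[j])$ for $j > n$. Equivalently, one checks $T^{\perp_\infty} = \mathrm{Pres}^{n-1}(T)$ via \cite[Theorem 3.11]{ba04}.

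\emph{Equivalence classes.} $n$-tilting modules $T, T'$ are equivalent iff $\mathrm{Add}(T) = \mathrm{Add}(T')$. Silting complexes are equivalent iff $\mathrm{Add}(X) = \mathrm{Add}(X')$ \cite[Lemma 4.5]{pv18}. Since $\Psi$ is compatible with coproducts and summands, and reflects isomorphisms between objects of $\mathrm{Mod}_{\mathcal P}(\Lambda)$ up to adding contractible (projective-injective in $\mathrm{Add}$-sense) summands, the two equivalences correspond. The contractible summands lie in $\mathrm{Add}(T)$ for every tilting $T$, because projective $\Lambda$-modules of the form $P \xrightarrow{1} P$ are Ext-injective relative to $\mathrm{Mod}_{\mathcal P}$. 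This yields injectivity on classes and completes the bijection.
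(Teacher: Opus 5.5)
Your reduction to \cite[Theorem 2.10]{ma18} is exactly what the paper does: it gives no argument beyond that citation. Your well-definedness direction is fine. The surjectivity sketch, however, contains a genuine error. For an $(n+1)$-term silting complex $X$, the $\Lambda$-module $\Theta^{-1}(X)$ is in general \emph{not} $n$-tilting, so it cannot be the preimage of $X$.

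Take $n\geq 1$ and $X=A$ in degree $0$. Then $\Theta^{-1}(A)$ is the indecomposable projective concentrated in degree $0$, and every module in $\mathrm{Gen}(\Theta^{-1}(A))$ vanishes in degrees $-n,\dots,-1$. Since $\Lambda$ is nonzero in degree $-n$, there is no monomorphism $\Lambda\to T^0$ with $T^0\in\mathrm{Add}(\Theta^{-1}(A))$, so (T3) fails. Likewise $\Theta^{-1}(A)^{\perp_\infty}=\mathrm{Mod}(\Lambda)\neq\mathrm{Pres}^{n-1}(\Theta^{-1}(A))$, so the Bazzoni criterion you mention fails too.

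Your own last paragraph contains the reason. Every $n$-tilting $T\in\mathrm{Mod}_{\mathcal P}(\Lambda)$ has all disks $\overline{A}_j$ in $\mathrm{Add}(T)$, since they are projective and lie in $T^{\perp_\infty}$ because $\Psi(\overline{A}_j)=0$. This is incompatible with choosing $T=\Theta^{-1}(X)$. So the degreewise-cone/preenvelope argument cannot work as set up, and in any case the (T3) step is only gestured at and deferred back to \cite{ma18}. (Minor: $\Psi(T)$ lies in $\Cpx{A}{-n}{0}{P}$, not in general in $\PresP{n}$, whose objects have vanishing intermediate cohomology.)

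The fix is to send $X$ to $T:=\Theta^{-1}(X)\oplus E$ with $E=\bigoplus_{j=-n}^{-1}\overline{A}_j$. This is the same device as $Z=\Sigma\oplus E$ in the proof of Lemma~\ref{equiv-conditions}. Then $\Psi(T)\cong X$, and (T1), (T2) hold as you say. Condition (T3) then follows from the criterion $T^{\perp_\infty}=\mathrm{Pres}^{n-1}(T)$ of \cite[Theorem 3.11]{ba04}:
\begin{itemize}
\item By the Ext isomorphism, $M\in T^{\perp_\infty}$ if and only if $\Psi(M)\in X^{\perp_{>0}}$.
\item The inclusion $\mathrm{Pres}^{n-1}(T)\subseteq T^{\perp_\infty}$ is dimension shifting, using (T2) and $\mathrm{pd}_\Lambda T\leq n$.
\item Conversely, given such $M$, represent the universal map $X^{(H)}\to\Psi(M)$, with $H=\Hom_{\mathbf{D}(A)}(X,\Psi(M))$, by a chain map. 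Add a map $E'\to M$ with $E'\in\mathrm{Add}(E)$ that is onto in degrees $-n,\dots,-1$.
\item The resulting map $\Theta^{-1}(X)^{(H)}\oplus E'\to M$ is an epimorphism. In degree $0$ this holds because the cocone $N_1$ of $X^{(H)}\to\Psi(M)$ lies in $X^{\perp_{>0}}\subseteq\mathbf{D}^{\leq 0}$, so $H^0(X^{(H)})\to H^0(M)$ is onto.
\item Its kernel $K$ satisfies $\Psi(K)\cong N_1\in X^{\perp_{>0}}$, i.e.\ $K\in T^{\perp_\infty}$. Iterating gives the required presentation.
\end{itemize}
The summand $E'$ is indispensable here, and it is exactly what your sketch was missing. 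The same correction, with disks of injectives, is needed on the cosilting side.

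Injectivity on classes is also simpler this way. Since $T^{\perp_\infty}=\{M\mid \Psi(M)\in X^{\perp_{>0}}\}$, equivalent silting complexes yield tilting modules with the same tilting class, hence the same $\mathrm{Add}$.
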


\subsection{$n$-silting modules and $n$-cosilting modules}\label{subsec:nsilt}
Now we recall the recent notion of $n$-silting and $n$-cosilting modules as recently introduced by Mao \cite{mao22}.

Let $\Sigma$ be an object $P_{n}\rightarrow P_{n-1}\rightarrow \cdots  \rightarrow P_{0}$ in $\PresP{n}$. Let
\begin{center}
$\mathcal{D}_{\Sigma}=\left\{X \in \mathrm{Mod}(A)~~ \bigg| ~~{\left.
\begin{aligned}
&\text{\rm The induced complex } \Hom_{A}(P_{0}, X)\rightarrow
\cdots\rightarrow \Hom_{A}(P_{n-1}, X) \\
&\rightarrow\Hom_{A}(P_{n}, X)\rightarrow 0\text{ is exact} \end{aligned}
\right. }
                                                               \right\}.$
\end{center}

In another words, observe that we have 
$$\mathcal{D}_{\Sigma} = \{X \in \mathrm{Mod}(A) \mid \Hom_{\mathbf{D}(A)}(\Sigma,X[i]) = 0 ~\forall i>0\}.$$

Suppose that $\Sigma$ is a projective $n$-presentation of $T$, so that $T=H^0(\Sigma)$. Recall from \cite[Definition 3.1]{mao22} that $T$ is called an \emph{$n$-silting module} (with respect to $\Sigma$) if $\mathrm{Pres}^{n-1}(T)=\mathcal{D}_{\Sigma}$.
In this case, the class $\mathrm{Pres}^{n-1}(T)$ is called an \emph{$n$-silting class}.

Let $\Omega$ be the object $E_{0} \rightarrow\cdots \rightarrow E_{n-1}\rightarrow E_{n}$ in $\CopresE{n}$. Let
\begin{center}
$\mathcal{B}_{\Omega}=\left\{X \in \mathrm{Mod}(A)~~ \bigg| ~~{\left.
\begin{aligned}
&\text{\rm The induced complex } \Hom_{A}( X, E_{0})\rightarrow \cdots\rightarrow\Hom_{A}( X, E_{n-2}) \\
&\rightarrow \Hom_{A}(X, E_{n-1})\rightarrow 0\text{ is exact} \end{aligned}
\right. }
                                                               \right\}.$
\end{center}

In other words, observe that we have 
$$\mathcal{B}_{\Omega} = \{X \in \mathrm{Mod}(A) \mid \Hom_{\mathbf{D}(A)}(X,\Omega[i]) = 0 ~\forall i>0\}.$$

Suppose that $\Omega$ is an injective $n$-copresentation of $C$, so that $C = H^0(\Omega)$.
Recall from \cite[Definition 4.1]{mao22} that $C$ is called an \emph{$n$-cosilting module} (with respect to $\Omega$) if $\mathrm{Copres}^{n-1}(C)= \mathcal{B}_{\Omega}$.
In this case, the class $\mathrm{Copres}^{n-1}(C)$ is called an \emph{$n$-cosilting class}.

\begin{remark} $(1)$  A $1$-silting module is just the silting module
in the sense of \cite{li16}.
A 1-cosilting module is just the cosilting module in the sense of \cite{br17}. From \cite[Theorem 4.9]{li16}, we know that  for an arbitrary ring there is a
bijection between (not necessarily finitely generated) silting modules and
(not necessarily compact) 2-silting complexes.

$(2)$ Let $T$ be an $A$-module. By \cite[Propositions 3.3 and 4.3]{mao22},  $T$ is $n$-tilting if and only if $T$ is a $n$-silting $A$-module with respect to a projective n-presentation $ P_{n}\s{\sigma}\rightarrow P_{n-1}\rightarrow \cdots \rightarrow P_{0}$ with $\sigma$ monic. In addition, $C$ is $n$-cotilting if and only if $C$ is a $n$-cosilting
module with respect to an injective $n$-copresentation
$E_{0} \rightarrow\cdots \rightarrow E_{n-1}\s{\omega}\rightarrow E_{n}$
with $\omega$ epic.
\end{remark}

Following \cite{mao22}, we say that an $n$-silting module $T$ is \textit{of finite type} if there is a subset $\mathcal{S}$ of $\PresFP{n}$ such that $\mathcal{D}_\Sigma = \bigcap_{\sigma \in \mathcal{S}}\mathcal{D}_\sigma$. Any $n$-tilting module and any $1$-silting module is of finite type \cite{sto07,ma19}. However, we shall demonstrate in \S\ref{counterexample} that a $2$-silting module may fail to be of finite type.

\section{  $n$-silting modules and silting complexes }

The following lemma is inspired by \cite[Lemma 5.4]{ma19}.
\begin{lemma} \label{key lem}
Let $\Sigma$ and $\Omega$ be the objects in $\PresP{n}$ and  $\CopresE{n}$, respectively.
Then for any $X \in \Pres{n}{}$ and $Y \in \Copres{n}{}$ we have:
 \begin{itemize}
 \item [$(i)$] $X$ belongs to $\Sigma^{\perp_{>0}}$ if and only if
 $H^0(X) \in \mathcal{D}_{\Sigma}$;
 \item [$(ii)$] $Y$ belongs to $^{\perp_{>0}}\Omega$ if and only if
 $ H^0(Y) \in \mathcal{B}_{\Omega}$.
 \end{itemize}
\end{lemma}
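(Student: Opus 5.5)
The plan is to compute $\Hom_{\mathbf{D}(A)}(\Sigma,X[i])$ for $i>0$ by reducing it, through truncations, to Hom-groups from $\Sigma$ into shifted modules. Such groups can be read off from the complex $\Hom_A(\Sigma,-)$: for a module $M$ and any integer $k$ we have $\Hom_{\mathbf{D}(A)}(\Sigma,M[k])=H^k(\Hom_A(\Sigma,M))$, because $\Sigma$ is a bounded complex of projectives. This group vanishes for $k>n$ and $k<0$, and by definition $M\in\mathcal{D}_\Sigma$ means it vanishes for $1\le k\le n$. Part $(ii)$ is the formal dual, with $\Hom_A(-,\Omega)$ and injectivity of the terms of $\Omega$, so I only describe $(i)$.

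\emph{Step 1 (reduction).} Since $X\in\Pres{n}{}$, it lies in $\mathbf{D}^{\le 0}$, so there is a triangle $\tau^{\le -1}X\to X\to H^0(X)\to(\tau^{\le -1}X)[1]$. Applying $\Hom(\Sigma,-[i])$ gives, for each $i>0$, an exact sequence
$$\Hom(\Sigma,\tau^{\le-1}X[i])\to\Hom(\Sigma,X[i])\to\Hom(\Sigma,H^0(X)[i])\to\Hom(\Sigma,\tau^{\le-1}X[i+1]).$$
The direction ``$X\in\Sigma^{\perp_{>0}}\Rightarrow H^0(X)\in\mathcal{D}_\Sigma$'' would follow once the right-hand term is shown to vanish, and the converse would follow once the left-hand term is shown to vanish.

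\emph{Step 2 (vanishing via filtration).} Filter $\tau^{\le -1}X$ by its cohomology modules $H^j(X)$ for $-n\le j\le -1$. This reduces both vanishings to the groups $\Hom(\Sigma,H^j(X)[i-j])=H^{i-j}(\Hom_A(\Sigma,H^j(X)))$. For $j=-n$ the exponent $i+n$ exceeds $n$, so these groups are zero. I would also use that $\Sigma$ is K-projective and concentrated in degrees $\ge -n$: this gives $\Hom(\Sigma,\mathbf{D}^{\le -n-1})=0$, so only the range $\tau^{\ge -n}$ matters. It also shows that the exactness of $\Sigma$ in the intermediate degrees identifies $\Sigma$ with an extension $H^{-n}(\Sigma)[n]\to\Sigma\to H^0(\Sigma)$.

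\emph{Main obstacle.} The intermediate terms, those with $-n<j<0$ and exponent $i-j\le n$, are not controlled by $H^0(X)$ alone. For instance, take $n\ge 2$ and let $\Sigma$ be a projective resolution of a module $T$ with $\mathrm{pd}_A T\le n$. Take $X=M[1]$ for a module $M$; this lies in $\Pres{n}{}$ and has $H^0(X)=0\in\mathcal{D}_\Sigma$. Yet $\Hom(\Sigma,X[1])=\Ext^2_A(T,M)$, which need not vanish. So, for arbitrary $X\in\Pres{n}{}$, the implication ``$H^0(X)\in\mathcal{D}_\Sigma\Rightarrow X\in\Sigma^{\perp_{>0}}$'' cannot be proved by this route, and in fact it fails as literally stated.

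The implication ``$X\in\Sigma^{\perp_{>0}}\Rightarrow H^0(X)\in\mathcal{D}_\Sigma$'' is also not settled by this argument in general: the right-hand term $\Hom(\Sigma,\tau^{\le-1}X[i+1])$ contains intermediate contributions $H^{i+1-j}(\Hom_A(\Sigma,H^j(X)))$, which need not vanish. What the argument does establish is that both directions go through when all intermediate groups $H^{i-j}(\Hom_A(\Sigma,H^j(X)))$ vanish. This holds in particular when $H^j(X)=0$ for $-n<j<0$, i.e.\ when $X\in\Pres{n}{E}$. In that case $\tau^{\le-1}X\simeq H^{-n}(X)[n]$, and the outer terms vanish by the degree bound from Step 2. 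I would therefore present the lemma with this hypothesis made explicit, pointing to the example above as the reason it is needed.
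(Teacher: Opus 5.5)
Your proposal is correct. Once you add the hypothesis that $X$ has vanishing intermediate cohomology, i.e.\ $X\in\mathbf{C}(A)^{[-n,0]}_{\mathcal{E}}$, your argument is exactly the paper's proof. The paper uses the soft truncation triangle $H^{-n}(X)[n]\to X\to H^0(X)\to H^{-n}(X)[n+1]$ and kills both outer terms by the degree bound on $\Sigma$. That triangle exists only when $H^j(X)=0$ for $-n<j<0$, so this hypothesis is tacitly assumed in the paper's proof, and it holds in every later use of the lemma.

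Your counterexample $X=M[1]$ correctly shows that the statement fails as literally written for $n\geq 2$. The implication $X\in\Sigma^{\perp_{>0}}\Rightarrow H^0(X)\in\mathcal{D}_\Sigma$ also genuinely fails once $n\geq 3$. Take $A=k[x,y,z]$, $\Sigma$ the Koszul resolution of $k$, and $X=(I^0\to I^1)$ in degrees $-1,0$, the start of an injective resolution of $A$. Then $\Hom_{\mathbf{D}(A)}(\Sigma,X[i])=H^i(\Hom_A(k,X))=0$ for $i>0$, but $\Ext^1_A(k,H^0(X))\cong\Ext^3_A(k,A)\neq 0$. For $n\le 2$, by contrast, this implication holds for all $X$ by your degree bound.
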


\begin{proof} We only prove $(1)$ and the proof of $(2)$ is similar. Since $X \in \Pres{n}{}$, there is the soft truncation triangle of $X$ of the form
  $$N[n] \to X \to M[0] \xrightarrow{+},$$
where $M = H^0(X)$ and $N=H^{-n}(X)$. Applying $\Hom_{\mathbf{D}(A)}(\Sigma,-)$ yields for any $i>0$ an exact sequence of the form
  $$\Hom_{\mathbf{D}(A)}(\Sigma,N[n+i]) \to \Hom_{\mathbf{D}(A)}(\Sigma,X[i]]) \to \Hom_{\mathbf{D}(A)}(\Sigma,M[i]) \to \Hom_{\mathbf{D}(A)}(\Sigma,N[n+i+1]).$$
Since $\Sigma \in \PresP{n}$, both the leftmost and the rightmost terms vanish, and thus we obtain for any $i>0$ an isomorphism 
  $$\Hom_{\mathbf{D}(A)}(\Sigma,X[i]) \cong \Hom_{\mathbf{D}(A)}(\Sigma,M[i]).$$
It remains to note that the vanishing of the right-hand term above for all $i>0$ is equivalent to $M \in \mathcal{D}_\Sigma$, while the vanishing of the left-hand term for all $i>0$ is equivalent to $X \in \Sigma^{\perp_{>0}}$.
\end{proof}

\begin{lemma}\label{res}
  Let $\Sigma \in \PresP{n}$ and $\Omega \in \CopresE{n}$. Then we have $\mathrm{Pres}^{n-1}(\mathcal{D}_\Sigma) \subseteq \mathcal{D}_\Sigma$ and $\mathrm{Copres}^{n-1}(\mathcal{B}_\Omega) \subseteq \mathcal{B}_\Omega$.
\end{lemma}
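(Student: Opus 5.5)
Take $N \in \mathrm{Pres}^{n-1}(\mathcal{D}_\Sigma)$ with an exact sequence
$$D_{n-1}\xrightarrow{d_{n-1}} D_{n-2}\to \cdots \to D_0 \to N \to 0, \qquad D_i\in\mathcal{D}_\Sigma,$$
so there are $n$ modules $D_0,\dots,D_{n-1}$. The plan is to apply Lemma~\ref{key lem}, which converts membership of $H^0$ in $\mathcal{D}_\Sigma$ into membership of a complex in $\Sigma^{\perp_{>0}}$. We do not need that lemma's "only if" direction; instead we build a complex in $\Pres{n}{}$ with $H^0=N$ and show directly that it lies in $\Sigma^{\perp_{>0}}$.

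Set $X = (D_{n-1}\to D_{n-2}\to\cdots\to D_0)$, placed in degrees $-(n-1),\dots,0$. This is a complex in $\Pres{n}{}$: its degree $-n$ term is zero, it is exact in degrees $-n+2,\dots,-1$, and $H^0(X)=N$. By Lemma~\ref{key lem}(i) it suffices to show $X\in\Sigma^{\perp_{>0}}$, i.e. $\Hom_{\mathbf{D}(A)}(\Sigma,X[i])=0$ for $i>0$.

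The main step is to filter $X$ by brutal (stupid) truncations. $X$ is an iterated extension of the stalk complexes $D_k[k]$, $k=0,\dots,n-1$. Each satisfies $\Hom_{\mathbf{D}(A)}(\Sigma, D_k[k+i])=0$ for all $i>0$: indeed $k+i>0$ and $D_k\in\mathcal{D}_\Sigma$, which by the displayed reformulation means $\Hom_{\mathbf{D}(A)}(\Sigma,D_k[j])=0$ for all $j>0$. Since $\Sigma^{\perp_{>0}}$ is closed under extensions (the long exact Hom sequence of a triangle), induction along the brutal truncation triangles gives $X\in\Sigma^{\perp_{>0}}$. Lemma~\ref{key lem}(i) then yields $N=H^0(X)\in\mathcal{D}_\Sigma$. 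Note that the filtration argument makes no use of the exactness of the sequence, and Lemma~\ref{key lem} only needs $H^0(X)=N$ together with $X\in\Pres{n}{}$. So the only delicate point is checking that $X$, with its zero term in degree $-n$, genuinely sits in $\Pres{n}{}$, and that check is immediate.

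The cosilting statement is dual. Given $0\to M\to B^0\to\cdots\to B^{n-1}$ exact with $B^k\in\mathcal{B}_\Omega$, take $Y=(B^0\to\cdots\to B^{n-1})$ in degrees $0,\dots,n-1$. Then $Y\in\Copres{n}{}$ and $H^0(Y)=M$. $Y$ is an iterated extension of the stalks $B^k[-k]$, and $\Hom_{\mathbf{D}(A)}(B^k[-k],\Omega[i])=\Hom_{\mathbf{D}(A)}(B^k,\Omega[i+k])=0$ for $i>0$. Hence $Y\in{}^{\perp_{>0}}\Omega$, and Lemma~\ref{key lem}(ii) gives $M\in\mathcal{B}_\Omega$.
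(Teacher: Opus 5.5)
Your filtration step is correct: $X$ is an iterated extension of the $D_k[k]$, and each of these lies in $\Sigma^{\perp_{>0}}$. The gap is the final appeal to Lemma~\ref{key lem}(i).

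Its statement says $X\in\Pres{n}{}$, but that lemma is only proved for $X\in\Pres{n}{E}$. Its proof starts from the soft truncation triangle $H^{-n}(X)[n]\to X\to H^0(X)[0]\to$, which exists only when $H^i(X)=0$ for $-n<i<0$. Your $X$ is exact in degrees $-n+2,\dots,-1$ but not in degree $-n+1$. There $H^{-n+1}(X)=\ker d_{n-1}$, which is in general non-zero, so the lemma does not apply.

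The literal reading you rely on (``the lemma only needs $H^0(X)=N$ and $X\in\Pres{n}{}$'') is false, and it proves too much. It never uses the length of the resolution. Applied to the zero-padded complex $(D_1\to D_0)$, it would give $\mathrm{Pres}^1(\mathcal{D}_\Sigma)\subseteq\mathcal{D}_\Sigma$ for every $n$. That fails for $n=3$:
\begin{itemize}
\item Take a regular local ring $(A,\mathfrak{m},k)$ of dimension $3$.
\item Let $\Sigma$ be a projective resolution of a $3$-tilting module with tilting class $\{M\mid \mathrm{Tor}^A_i(k,M)=0 \text{ for } i=0,1,2\}$. This is the perversity with $f(\mathfrak{m})=3$ and $f=0$ elsewhere.
\item The terms $E^0,E^1$ of the minimal injective resolution of $A$ lie in $\mathcal{D}_\Sigma$.
\item However, $N=\coker(E^0\to E^1)$ has $\mathrm{Tor}^A_2(k,N)\cong k\neq 0$.
\end{itemize}
Separately, for $n=1$ your $X$ is just $D_0$, so $H^0(X)=D_0\neq N$.

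The missing ingredient is the degree bound $\Hom_{\mathbf{D}(A)}(\Sigma,L[j])=0$ for every module $L$ and every $j>n$, applied to $K=\ker d_{n-1}$. Put $K$ in degree $-n$. Then $X'=(K\to D_{n-1}\to\cdots\to D_0)$ is quasi-isomorphic to $N$. Its brutal-truncation filtration has one extra factor, $K[n]$, and this lies in $\Sigma^{\perp_{>0}}$ by that bound. Hence $N\in\Sigma^{\perp_{>0}}$, i.e.\ $N\in\mathcal{D}_\Sigma$, for all $n\geq 1$, with no appeal to Lemma~\ref{key lem}.

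Equivalently, for $n\geq 2$ you can keep your $X$ and use the triangle $K[n-1]\to X\to N\to K[n]$. It makes $\Hom_{\mathbf{D}(A)}(\Sigma,X[i])\to\Hom_{\mathbf{D}(A)}(\Sigma,N[i])$ surjective for $i>0$, and this is where the exactness in degrees $-n+2,\dots,-1$ that you recorded is actually used.

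Once repaired, your proof is the paper's dimension-shifting argument in totalized form. The paper shifts along the syzygies of the resolution, and its decisive vanishing is for the last syzygy, which is exactly your $\ker d_{n-1}$.

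The dual half needs the same correction. Adjoin $C=\coker(B^{n-2}\to B^{n-1})$ in degree $n$ (for $n=1$, $C=B^0/M$), and use $\Hom_{\mathbf{D}(A)}(C,\Omega[n+i])=0$ for $i\geq 1$.

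A minor point: the implication you use, from $X\in\Sigma^{\perp_{>0}}$ to $H^0(X)\in\mathcal{D}_\Sigma$, is the ``only if'' direction of Lemma~\ref{key lem}(i), not the ``if'' direction.
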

\begin{proof}
  We prove only the claim about $\mathcal{D}_\Sigma$, the other follows by a dual argument. For any $A$-module $M$, consider a short exact sequence
  $$0 \to K \to D \to M \to 0$$
  with $D \in \mathcal{D}_\Sigma$. Applying $\operatorname{Hom}_{\mathbf{D}(A)}(\Sigma,-)$ we obtain for any $i>0$ an exact sequence of the form
  $$\operatorname{Hom}_{\mathbf{D}(A)}(\Sigma,D[i]) \to \operatorname{Hom}_{\mathbf{D}(A)}(\Sigma,M[i]) \to \operatorname{Hom}_{\mathbf{D}(A)}(\Sigma,N[i+1]) \to \operatorname{Hom}_{\mathbf{D}(A)}(\Sigma,D[i+1]).$$
  As $D \in \mathcal{D}_\Sigma$, we see that the left-most and the right-most group in the last display vanish, and therefore we obtain an isomorphism $\operatorname{Hom}_{\mathbf{D}(A)}(\Sigma,M[i]) \cong \operatorname{Hom}_{\mathbf{D}(A)}(\Sigma,N[i+1])$ for any $i>0$. As $\Sigma \in \PresP{n}$, this shows that $\operatorname{Hom}_{\mathbf{D}(A)}(\Sigma,M[i]) = 0$ for any $i \geq n$.

  Let now $M \in \mathrm{Pres}^{n-1}(\mathcal{D}_\Sigma)$. Using the previous argument inductively, we see $\operatorname{Hom}_{\mathbf{D}(A)}(\Sigma,M[i])$ vanishes for any $i>0$, which amounts precisely to the desired $M \in \mathcal{D}_\Sigma$.
\end{proof}

\begin{lemma}\label{one-implication}
  \begin{itemize}
    \item [$(i)$] Let $\Sigma \in \PresP{n}$. If $\Sigma$ is an $(n+1)$-term silting complex then $H^0(\Sigma)$ is an $n$-silting module with respect to $\Sigma$.
    \item [$(ii)$] Dually, let $\Omega \in \CopresE{n}$. If $\Omega$ is an $(n+1)$-term cosilting complex then $H^0(\Omega)$ is an $n$-cosilting module with respect to $\Omega$.
   \end{itemize}
\end{lemma}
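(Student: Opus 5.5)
The plan is to prove the two inclusions $\mathrm{Pres}^{n-1}(T)\subseteq \mathcal{D}_\Sigma$ and $\mathcal{D}_\Sigma\subseteq \mathrm{Pres}^{n-1}(T)$ separately, where $T=H^0(\Sigma)$. I give the argument for $(i)$; $(ii)$ should follow by the dual argument, using part $(ii)$ of Lemma~\ref{key lem}, the $\mathcal{B}_\Omega$ part of Lemma~\ref{res} and products instead of coproducts.

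For the inclusion $\subseteq$, silting gives $\mathrm{Add}(\Sigma)\subseteq \Sigma^{\perp_{>0}}$. For any set $J$, the complex $\Sigma^{(J)}$ lies in $\PresP{n}\subseteq \Pres{n}{}$, and $H^0(\Sigma^{(J)})=T^{(J)}$. So Lemma~\ref{key lem}$(i)$ gives $T^{(J)}\in\mathcal{D}_\Sigma$. Since $\mathcal{D}_\Sigma$ is closed under direct summands, $\mathrm{Add}(T)\subseteq \mathcal{D}_\Sigma$. Lemma~\ref{res} then yields $\mathrm{Pres}^{n-1}(T)\subseteq \mathrm{Pres}^{n-1}(\mathcal{D}_\Sigma)\subseteq\mathcal{D}_\Sigma$.

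For the inclusion $\supseteq$, take $M\in\mathcal{D}_\Sigma$. Because $\Sigma$ sits in nonpositive degrees and $M$ in degree $0$, we have $\Hom_{\mathbf{D}(A)}(\Sigma,M)\cong\Hom_A(T,M)$. Let $J=\Hom_A(T,M)$ and consider the universal map $T^{(J)}\to M$, together with its lift $\Sigma^{(J)}\to M$ in $\mathbf{D}(A)$. Complete the lift to a triangle $\Sigma^{(J)}\to M\to C\to$.
\begin{itemize}
\item Since $\Hom(\Sigma,\Sigma^{(J)}[i])=0$ for $i>0$ and $\Hom(\Sigma,M)$ is hit surjectively, the long exact sequence gives $C\in\Sigma^{\perp_{>0}}$ and $\Hom(\Sigma,C)=0$.
\item Truncating, $\Hom(\Sigma,Z)=0$ for $Z\in\mathbf{D}^{<-n}$. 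Hence $C'=\tau^{\geq -n}C$ still satisfies these two vanishing conditions.
\end{itemize}
Surjectivity of $T^{(J)}\to M$ amounts to $H^0(C)=0$, and this is where I expect the main obstacle. The idea is to feed the t-structure axiom into it. We know $C'\in\Sigma^{\perp_{>0}}$ and $\Hom(\Sigma,C')=0$, and I would try to extend this vanishing to all negative shifts, using the long exact sequence and the fact that $\Sigma^{\perp_{>0}}$ is an aisle. That would put $C'$ (or its relevant truncation) into $\Sigma^{\perp_{\mathbb{Z}}}$, which is $0$ by Lemma~\ref{silting-complex-generate}$(ii)$. If this route stalls, I would instead argue directly: $Q=H^0(C)$ lies in $\mathcal{D}_\Sigma$ and receives no nonzero map from $T$ modulo the trace, and the silting condition should force $Q=0$.

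With surjectivity in hand, we get $0\to K\to T^{(J)}\to M\to0$ and iterate on $K$. The dimension shift in the proof of Lemma~\ref{res} gives $\Hom(\Sigma,K[i+1])\cong\Hom(\Sigma,M[i])=0$ for $i>0$, so $K$ satisfies the vanishing for $i\geq2$ but possibly not for $i=1$. At each step the range of vanishing degrees in which $K$ is known to be $\Sigma$-orthogonal shrinks by one. The generating argument needs to be run in the weaker setting $\Hom(\Sigma,K[i])=0$ for $i\ge 2$ (using $\Hom(\Sigma,-)$ on the triangle for the lift), and it allows exactly $n-1$ further steps. Those steps produce $T_{n-1}\to\cdots\to T_0\to M\to 0$ with $T_i\in\mathrm{Add}(T)$, i.e. $M\in\mathrm{Pres}^{n-1}(T)$.
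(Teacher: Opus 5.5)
Your proof of $\mathrm{Pres}^{n-1}(T)\subseteq\mathcal{D}_\Sigma$ is correct and is the paper's argument. The reverse inclusion has a genuine gap exactly where you suspect it: you never prove that $T^{(J)}\to M$ is surjective.

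Your main route aims at something false. The complex $C'=\tau^{\geq -n}C$ has $H^{-1}(C')=K:=\ker(T^{(J)}\to M)$ and $H^0(C')=Q:=\coker(T^{(J)}\to M)$. The triangle $K[1]\to C'\to Q\to K[2]$ gives $\Hom_{\mathbf{D}(A)}(\Sigma,C'[-1])\cong\Hom_A(T,K)$, which is nonzero in general. Indeed, $C'=0$ would mean $T^{(J)}\cong M$. Knowing that $\Sigma^{\perp_{>0}}$ is an aisle only gives closure under positive shifts, so it cannot produce vanishing in negative degrees.

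Your fallback, that $Q\in\mathcal{D}_\Sigma$ and $\Hom_A(T,Q)=0$, is only asserted. The same triangle yields just $\Hom_{\mathbf{D}(A)}(\Sigma,Q[i])\cong\Hom_{\mathbf{D}(A)}(\Sigma,K[i+2])$ for $i\geq 0$, and you have no control over the right-hand side.

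The missing ingredient is the inclusion $\Sigma^{\perp_{>0}}\subseteq\mathbf{D}^{\leq 0}$. This is standard for a silting complex concentrated in non-positive degrees, and the paper's proof uses precisely it. You have already shown $\Hom_{\mathbf{D}(A)}(\Sigma,C[i])=0$ for all $i\geq 0$, i.e.\ $C\in\Sigma^{\perp_{>0}}[1]\subseteq\mathbf{D}^{\leq -1}$. Hence $Q=H^0(C)=0$ immediately, with no truncation and no appeal to $\Sigma^{\perp_{\mathbb{Z}}}=0$.

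The iteration is also mis-analysed. Consider the long exact sequence for $0\to K\to T^{(J)}\to M\to 0$. The map $\Hom_{\mathbf{D}(A)}(\Sigma,T^{(J)})\to\Hom_{\mathbf{D}(A)}(\Sigma,M)$ identifies with $\Hom_A(T,T^{(J)})\to\Hom_A(T,M)$, which is surjective because $T^{(J)}\to M$ is universal. Since $\Hom_{\mathbf{D}(A)}(\Sigma,T^{(J)}[1])=0$, you get $\Hom_{\mathbf{D}(A)}(\Sigma,K[1])=0$. So $K\in\mathcal{D}_\Sigma$ and nothing shrinks.

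Your ``weaker setting'' claim is unsupported: with vanishing only for $i\geq 2$ you lose $\Hom_{\mathbf{D}(A)}(\Sigma,C[1])=0$, which the aisle argument needs. It is also unnecessary.

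With these two repairs your argument is a valid alternative to the paper's. The paper applies Wei's approximation triangles to a projective $n$-presentation of $M$ and then takes $H^0$. You instead build an $\mathrm{Add}(T)$-resolution of $M$ directly from universal maps. This avoids Wei's lemma and yields $M\in\mathrm{Pres}^m(T)$ for every $m$, with all syzygies in $\mathcal{D}_\Sigma$.
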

\begin{proof}
  \textit{$(i)$} Set $T=H^0(\Sigma)$. By Lemma~\ref{key lem}, we have $\mathrm{Add}(\Sigma) \subseteq \Sigma^{\perp_{>0}}$ if and only if $\mathrm{Add}(T) \subseteq \mathcal{D}_\Sigma$. Then it follows from Lemma~\ref{res} that $\mathrm{Pres}^{n-1}(T) \subseteq \mathcal{D}_\Sigma$. Assume now that $\Sigma$ is a silting complex. By \cite[Lemma 3.11]{wei13}, we have for any $N \in \Sigma^{\perp_{>0}}$ a collection of triangles indexed by $i\geq 0$ of the form
   $$N_{i+1} \to \Sigma_i \to N_i \to N_{i+1}[1],$$
  where $N_0 = N$, $T_i \in \mathrm{Add}(\Sigma)$, and $N_i \in \Sigma^{\perp_{>0}}$ for all $i \geq 0$. Consider $M \in \mathcal{D}_\Sigma$ and let $N \in \PresP{n}$ be any projective $n$-presentation of $M$. Then $N \in \Sigma^{\perp_{>0}}$ by Lemma~\ref{key lem} and the above observation applies. Applying $H^0$ to the triangles yields for any $i \geq 0$ an exact sequence of the form
  $$M_{i+1} \to T_i \to M_i \to 0,$$
  where $M_i = H^0(N_i)$ and $T_i = H^0(\Sigma_i)$, note that $H^{-1}(N_{i}) = 0$ for any $i \geq 0$ as $N_i \in \Sigma^{\perp_{>0}} \subseteq \mathbf{D}^{\leq 0}$. Since $T_i \in \mathrm{Add}(T)$, we conclude that $M \in \mathrm{Pres}^{n-1}(T)$ as desired.

  \textit{$(ii)$} Dual.  
\end{proof}

Towards proving a partial converse to Lemma~\ref{one-implication}, the following condition will be useful to consider for an object $Z \in \mathrm{Mod}_{\mathcal{E}}(AQ_{n+1}/I)$:

$(\dag)$ The cotorsion pair $\mathfrak{C}=({}^{\perp_{\infty
}}(Z^{\perp_{\infty}}), Z^{\perp_{\infty}})$ in $\mathrm{Mod}(AQ_{n+1}/I)$ associated to $Z$ has its kernel $\Ker(\mathfrak{C})={^{\perp_{\infty}}(Z^{\perp_{\infty}})\cap Z^{\perp_{\infty}}}$ contained in $\mathrm{Mod}_{\mathcal{E}}(AQ_{n+1}/I)$.

Note that if $T$ is a tilting module in $\mathrm{Mod}_{\mathcal{E}}(AQ_{n+1}/I)$ then it surely satisfies $(\dag)$ as then $\Ker(\mathfrak{C}) = \mathrm{Add}(T)$.

\begin{lemma} \label{lem tilting} Let $\Sigma$ be any object in $\mathrm{Mod}_{\mathcal{P,E}}(AQ_{n+1}/I)$. Suppose that the condition $(\dag)$ holds for $\Sigma$ and that ${\Sigma}^{\perp_{\infty}}\cap\mathrm{Mod}_{\mathcal{E}}(AQ_{n+1}/I)$ is closed under direct sums in $\mathrm{Mod}(AQ_{n+1}/I)$.
Then ${\Sigma}^{\perp_{\infty}}$ is closed under direct sums in $\mathrm{Mod}(AQ_{n+1}/I)$.
\end{lemma}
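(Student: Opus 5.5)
The plan is to use dimension shifting along resolutions by objects of the kernel $\Ker(\mathfrak{C})$. Let $(M_\alpha)_{\alpha}$ be a family in $\Sigma^{\perp_\infty}$ and set $M=\bigoplus_\alpha M_\alpha$. We must show $\Ext^j_{AQ_{n+1}/I}(\Sigma,M)=0$ for all $j>0$.

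First, I record that $\Sigma$ has finite projective dimension in $\mathrm{Mod}(AQ_{n+1}/I)$. By \cite[Lemma 2.4]{ma18}, $\Ext^j(\Sigma,N)\cong\Hom_{\mathbf{D}(A)}(\Psi(\Sigma),\Psi(N)[j])$ for every $N$. Here $\Psi(\Sigma)$ is a complex of projectives in degrees $[-n,0]$ and $\Psi(N)$ is concentrated in degrees $[-n,0]$. Hence this Hom vanishes for $j>2n$, so $\mathrm{pd}\,\Sigma\le 2n$ (a crude bound suffices). Consequently $\Sigma^{\perp_\infty}=\mathcal{S}^{\perp_1}$, where $\mathcal{S}$ is the set consisting of $\Sigma$ and its first $2n$ syzygies in a projective resolution. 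By the Eklof--Trlifaj theorem the cotorsion pair $\mathfrak{C}=({}^{\perp_\infty}(\Sigma^{\perp_\infty}),\Sigma^{\perp_\infty})$ is complete, as it is generated by a set.

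Next, for each $\alpha$ I take a special ${}^{\perp_\infty}(\Sigma^{\perp_\infty})$-precover $0\to B_{1,\alpha}\to K_{0,\alpha}\to M_\alpha\to 0$, with $B_{1,\alpha}\in\Sigma^{\perp_\infty}$. Since $\Sigma^{\perp_\infty}$ is closed under extensions, $K_{0,\alpha}\in\Sigma^{\perp_\infty}$, and so $K_{0,\alpha}\in\Ker(\mathfrak{C})$. By $(\dag)$ this gives $K_{0,\alpha}\in\mathrm{Mod}_{\mathcal{E}}(AQ_{n+1}/I)$. Iterating with $B_{1,\alpha}$ in place of $M_\alpha$ produces short exact sequences
$$0\to B_{m+1,\alpha}\to K_{m,\alpha}\to B_{m,\alpha}\to 0\qquad(B_{0,\alpha}=M_\alpha),$$
with $K_{m,\alpha}\in\Sigma^{\perp_\infty}\cap\mathrm{Mod}_{\mathcal{E}}(AQ_{n+1}/I)$ and $B_{m,\alpha}\in\Sigma^{\perp_\infty}$.

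Finally, I take direct sums over $\alpha$; direct sums of short exact sequences stay exact. By hypothesis, $K_m:=\bigoplus_\alpha K_{m,\alpha}$ lies in $\Sigma^{\perp_\infty}\cap\mathrm{Mod}_{\mathcal{E}}(AQ_{n+1}/I)$, since this class is closed under direct sums. The long exact Ext sequences for $0\to B_{m+1}\to K_m\to B_m\to 0$, where $B_m=\bigoplus_\alpha B_{m,\alpha}$, then give
$$\Ext^j(\Sigma,M)\cong\Ext^{j+1}(\Sigma,B_1)\cong\cdots\cong\Ext^{j+m}(\Sigma,B_m)$$
for all $j>0$. Choosing $m$ with $j+m>2n\ge\mathrm{pd}\,\Sigma$ makes the last group vanish, which proves $M\in\Sigma^{\perp_\infty}$.

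The main subtle point is guaranteeing that the resolving terms $K_{m,\alpha}$ land in $\mathrm{Mod}_{\mathcal{E}}(AQ_{n+1}/I)$, so that the hypothesis on direct sums applies. This is exactly where completeness of $\mathfrak{C}$ and condition $(\dag)$ are used. The bound on $\mathrm{pd}\,\Sigma$ is routine via \cite[Lemma 2.4]{ma18}.
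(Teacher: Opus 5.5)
Your proof is correct and is essentially the paper's argument: iterated special precovers give resolutions by objects of $\Ker(\mathfrak{C})\subseteq\mathrm{Mod}_{\mathcal{E}}(AQ_{n+1}/I)$, these are summed using the hypothesis, and dimension shifting against $\Sigma$, which has finite projective dimension, finishes the proof (the paper phrases this as $\Sigma^{\perp_\infty}$ being exactly the class of $\Ker(\mathfrak{C})$-resolved modules). As a minor remark, your degree count already gives $\mathrm{pd}\,\Sigma\le n$, matching \cite[Lemma 2.5(6)]{ma18}, since the relevant Hom vanishes as soon as $j>n$.
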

\begin{proof} 
We know from \cite[Lemma 2.5(6)]{ma18} that the projective dimension of $\Sigma$ is at most $n$ in $\mathrm{Mod}(AQ_{n+1}/I)$.
By \cite[\S 6]{tr12}, there exists a cotorsion pair $(^{\perp_{\infty}}(\Sigma^{\perp_{\infty}}),\Sigma^{\perp_{\infty}})$ in $\mathrm{Mod}(AQ_{n+1}/I)$ induced by $\Sigma$.
Using condition $(\dag)$, we obtain that the class
\[\mathcal{X}:={^{\perp_{\infty}}(\Sigma^{\perp_{\infty}})}\cap {\Sigma}^{\perp_{\infty}}={^{\perp_{\infty}}(\Sigma^{\perp_{\infty}})}\cap {\Sigma}^{\perp_{\infty}}\cap\mathrm{Mod}_{\mathcal{E}}(AQ_{n+1}/I)\] is closed under direct sums in $\mathrm{Mod}(AQ_{n+1}/I)$.
Now we claim that the class ${\Sigma}^{\perp_{\infty}}$ equals to the class of all $\mathcal{X}$-resolved modules.
Indeed, if $M\in{\Sigma}^{\perp_{\infty}}$, then an $\mathcal{X}$-resolution is obtained by an iteration of special
${^{\perp_{1}}(\Sigma^{\perp_{\infty}})}$-precovers.
Conversely, assume there exists an $\mathcal{X}$-resolution of an object $B$
$$\cdots\rightarrow E_{n}\rightarrow \cdots \rightarrow E_{0}\rightarrow B\rightarrow 0$$
Denote by $K_{0}$ the kernel of the epimorphism $E_{0}\rightarrow B$,
by $K_{1}$ the kernel of the epimorphism $E_{1}\rightarrow K_{0}$, etc.
Let $X\in {^{\perp_{1}}(\Sigma^{\perp_{\infty}})}$. Then
$\Ext^{1}_{AQ_{n+1}/I}(X,B)\cong \Ext^{2}_{AQ_{n+1}/I}(X,K_{0})\cong \cdots \cong\Ext^{n+1}_{AQ_{n+1}/I}(X,K_{n-1})=0$ as $X$ has projective dimension at most $n$, so $B\in {\Sigma}^{\perp_{\infty}}$.
This proves the claim, which in turn implies that ${\Sigma}^{\perp_{\infty}}$ is closed under direct sums.
\end{proof}

Given $-n \leq j < 0$, let $\overline{A}_{j} \in \Mod(AQ_{n+1}/I)$ be the ``disk'' representation which corresponds under the functor $\Theta: \Mod(AQ_{n+1}/I) \to \Cpx{A}{-n}{0}{}$ to the complex $(\cdots \to 0 \to 0 \to  A \xrightarrow{=} A \to 0 \to 0 \to \cdots)$ concentrated in degrees $j$ and $j+1$.

\begin{lemma}\label{equiv-conditions}
  Let $\Sigma \in \PresP{n}$ be a projective $n$-presentation of an $n$-silting module $T = H^0(\Sigma)$. The following conditions are equivalent:
  \begin{enumerate}
    \item[$(i)$] $\Sigma$ is a silting complex in $\mathbf{D}(A)$,
    \item[$(ii)$] $\Sigma$ satisfies the condition $(\dag)$,
    \item[$(iii)$] $\Sigma^{\perp_{\mathbb{Z}}} = 0$.
  \end{enumerate}
\end{lemma}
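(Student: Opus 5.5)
The equivalence $(i)\Leftrightarrow(iii)$ is essentially formal. I would first observe that the first condition of Lemma~\ref{silting-complex-generate} holds automatically under our hypothesis. By Lemma~\ref{key lem}, every object $X\in\mathrm{Add}(\Sigma)\subseteq\Pres{n}{}$ satisfies $X\in\Sigma^{\perp_{>0}}$ if and only if $H^0(X)\in\mathcal{D}_\Sigma$. Moreover $H^0(X)\in\mathrm{Add}(T)\subseteq\mathrm{Pres}^{n-1}(T)=\mathcal{D}_\Sigma$, because $T$ is $n$-silting with respect to $\Sigma$. Hence $\mathrm{Add}(\Sigma)\subseteq\Sigma^{\perp_{>0}}$, and Lemma~\ref{silting-complex-generate} says $\Sigma$ is silting exactly when $\Sigma^{\perp_{\mathbb{Z}}}=0$. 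So it remains to relate $(ii)$ to these.

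For $(i)\Rightarrow(ii)$, I would use Theorem~\ref{mv-bijection} together with \cite[Theorem 2.10]{ma18}. These show that if $\Sigma$ is an $(n+1)$-term silting complex, then $\Sigma$, viewed as an object of $\mathrm{Mod}_{\mathcal{P}}(AQ_{n+1}/I)$, is an $n$-tilting module there. Its cotorsion pair then has kernel $\mathrm{Add}(\Sigma)$. Since cohomology commutes with direct sums and summands, every object of $\mathrm{Add}(\Sigma)$ again has vanishing intermediate cohomology, so $\mathrm{Add}(\Sigma)\subseteq\mathrm{Mod}_{\mathcal{E}}(AQ_{n+1}/I)$, which is $(\dag)$. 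This is the remark made just before Lemma~\ref{lem tilting}.

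For $(ii)\Rightarrow(i)$, I would show that $\Sigma$ is an $n$-tilting module in $\mathrm{Mod}(AQ_{n+1}/I)$ and then invoke Theorem~\ref{mv-bijection}. The argument proceeds in four steps.
\begin{enumerate}
\item[(a)] The projective dimension of $\Sigma$ is at most $n$ by \cite[Lemma 2.5(6)]{ma18}.
\item[(b)] By the isomorphism $\Ext^j_{AQ_{n+1}/I}(\Sigma,M)\cong\Hom_{\mathbf{D}(A)}(\Sigma,\Psi(M)[j])$ and the first paragraph, we get $\Sigma^{(\kappa)}\in\Sigma^{\perp_\infty}$.
\item[(c)] To apply Lemma~\ref{lem tilting}, note that for $Z\in\mathrm{Mod}_{\mathcal{E}}(AQ_{n+1}/I)$ the same isomorphism and Lemma~\ref{key lem} give $Z\in\Sigma^{\perp_\infty}$ if and only if $H^0(Z)\in\mathcal{D}_\Sigma=\mathrm{Pres}^{n-1}(T)$. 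The latter class is closed under direct sums, and so is $\mathrm{Mod}_{\mathcal{E}}$. Thus $\Sigma^{\perp_\infty}\cap\mathrm{Mod}_{\mathcal{E}}(AQ_{n+1}/I)$ is closed under direct sums, and Lemma~\ref{lem tilting} together with $(\dag)$ shows that $\Sigma^{\perp_\infty}$ is closed under direct sums.
\item[(d)] It remains to obtain the coresolution axiom (T3), or equivalently $\mathrm{Pres}^{n-1}(\Sigma)=\Sigma^{\perp_\infty}$.
\end{enumerate}

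Step (d) is the main obstacle. My first attempt would use the characterization of tilting cotorsion pairs (Angeleri Hügel--Coelho, \cite{tr12}). The pair $({}^{\perp_\infty}(\Sigma^{\perp_\infty}),\Sigma^{\perp_\infty})$ is hereditary and complete, its left class has projective dimension at most $n$, and its right class is closed under direct sums. Such a pair is therefore a tilting cotorsion pair of some $n$-tilting module $T'$, with $\Ker(\mathfrak{C})=\mathrm{Add}(T')$.

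It then remains to see that $\mathrm{Add}(T')=\mathrm{Add}(\Sigma)$. The inclusion $\Sigma\in\Ker(\mathfrak{C})$ is clear from (a) and (b). For the converse, I would use $(\dag)$: every $K\in\Ker(\mathfrak{C})$ lies in $\mathrm{Mod}_{\mathcal{E}}$ and in $\Sigma^{\perp_\infty}$, so $H^0(K)\in\mathrm{Pres}^{n-1}(T)$. Lifting this presentation along the projective components, I would try to build an $\mathrm{Add}(\Sigma)$-resolution of $K$ of length $n$ whose syzygies stay in $\Sigma^{\perp_\infty}$. Since $K$ also lies in ${}^{\perp_\infty}(\Sigma^{\perp_\infty})$, this resolution should split, giving $K\in\mathrm{Add}(\Sigma)$. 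This last splitting argument is where I expect the real work, and where $(\dag)$ is used essentially. Once $\mathrm{Add}(T')=\mathrm{Add}(\Sigma)$, the module $\Sigma$ is itself $n$-tilting in $\mathrm{Mod}_{\mathcal{P}}(AQ_{n+1}/I)$, and Theorem~\ref{mv-bijection} yields $(i)$.
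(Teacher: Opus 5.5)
Your reduction of $(i)\Leftrightarrow(iii)$ to Lemma~\ref{silting-complex-generate} and your steps (a)--(c) are fine. The gap is in step (d): the statement you aim for there is false in general. That statement is $\Ker(\mathfrak{C})=\mathrm{Add}(\Sigma)$, i.e.\ that $\Sigma$ itself is $n$-tilting in $\mathrm{Mod}(AQ_{n+1}/I)$.

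Each disk $\overline{A}_j$, $-n\le j<0$, is projective in $\mathrm{Mod}(AQ_{n+1}/I)$. It also satisfies $\Ext^i_{AQ_{n+1}/I}(\Sigma,\overline{A}_j)\cong\Hom_{\mathbf{D}(A)}(\Sigma,\Psi(\overline{A}_j)[i])=0$, because $\Psi(\overline{A}_j)=0$. So $\overline{A}_j\in\Ker(\mathfrak{C})$ always.

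Now take $\Sigma=A$ in degree $0$ with $P_i=0$ for $i>0$. Then $T=A$ is $n$-silting, $\Sigma$ is silting, and $(\dag)$ holds. But every object of $\mathrm{Add}(\Sigma)$ vanishes in negative degrees, so $\overline{A}_{-1}\notin\mathrm{Add}(\Sigma)$. Also (T3) fails for $\Sigma$: the regular module has nonzero negative-degree components and cannot embed into an object of $\mathrm{Add}(\Sigma)$.

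In terms of your argument: lifting $T^{(I)}\to H^0(K)$ gives a map $\Sigma^{(I)}\to K$ that is surjective on $H^0$ but not componentwise. So it is not an epimorphism in $\mathrm{Mod}(AQ_{n+1}/I)$, and there is nothing to split.

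The same slip occurs in your $(i)\Rightarrow(ii)$. Theorem~\ref{mv-bijection} only provides some tilting $T''\in\mathrm{Mod}_{\mathcal{P}}(AQ_{n+1}/I)$ with $\Psi(T'')\cong\Sigma$, not $\Sigma$ itself. Condition $(\dag)$ still follows, since the $\Ext$-isomorphism gives ${T''}^{\perp_\infty}=\Sigma^{\perp_\infty}$ and $\mathrm{Add}(T'')\subseteq\mathrm{Mod}_{\mathcal{E}}(AQ_{n+1}/I)$.

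The missing idea is to pass to $Z=\Sigma\oplus E$ with $E=\bigoplus_j\overline{A}_j$. Since $E$ is projective and contractible, $Z^{\perp_\infty}=\Sigma^{\perp_\infty}$ and $\Psi(Z)\cong\Sigma$.

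The disks repair surjectivity componentwise. For $X\in Z^{\perp_\infty}\cap\mathrm{Mod}_{\mathcal{E}}(AQ_{n+1}/I)$, take your lift $\Sigma^{(I)}\to X$ together with a map $E^{(J)}\to X$ hitting generators of each $X_j$, $j<0$. The sum is an epimorphism, so this class lies in $\mathrm{Gen}(Z)$.

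Now let $M\in\Ker(\mathfrak{C})$. By $(\dag)$, $M\in\mathrm{Mod}_{\mathcal{E}}$, so an $\mathrm{Add}(Z)$-precover $Y\to M$ is onto. Its kernel lies in $Z^{\perp_\infty}$, by the precover property and $Y\in Z^{\perp_\infty}$ (which uses (b) and (c)). Since $M\in{}^{\perp_1}(Z^{\perp_\infty})$, the sequence splits. This takes one step; no length-$n$ resolution is needed. Hence $\Ker(\mathfrak{C})=\mathrm{Add}(Z)$.

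With this correction your Angeleri H\"{u}gel--Coelho route works. You get $\mathrm{Add}(T')=\mathrm{Add}(Z)$, so $Z$ is $n$-tilting in $\mathrm{Mod}_{\mathcal{P}}(AQ_{n+1}/I)$, and Theorem~\ref{mv-bijection} applies to $\Psi(Z)\cong\Sigma$.

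The paper avoids that characterization. It iterates special $Z^{\perp_\infty}$-preenvelopes of the projective stalk $A$ in degree $0$ to obtain a finite $\mathrm{Add}(Z)$-coresolution. This puts $A$ in the thick subcategory generated by $\mathrm{Add}(\Sigma)$ and gives $\Sigma^{\perp_{\mathbb{Z}}}=0$ directly.
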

\begin{proof}
  $(i) \implies (ii):$ Since $\Sigma$ is a silting complex in $\mathbf{D}(A)$, it also represents a tilting module in $\Mod_\mathcal{P}(AQ_{n+1}/I)$ by Theorem~\ref{mv-bijection}. Then $\Sigma$ clearly satisfies $(\dag)$.

  $(ii) \implies (iii):$ 
By Lemma \ref{key lem}, the fact that $\mathcal{D}_{\Sigma}$ is closed under coproducts in $\mathrm{Mod}(A)$ corresponds to ${\Sigma}^{\perp_{\infty}}\cap \mathrm{Mod}_{\mathcal{E}}(AQ_{n+1}/I)$ being closed under direct sums. Hence, by Lemma \ref{lem tilting}, we may conclude that $\Sigma^{\perp_\infty}$ is closed under direct sums in $\mathrm{Mod}(AQ_{n+1}/I)$ (which together with $\Sigma \in \Sigma^{\perp_\infty}$ amounts to $\Sigma$ being a \textit{partial $n$-tilting module} in $\mathrm{Mod}(AQ_{n+1}/I)$).

Let $E=\bigoplus^{n}_{i=1}\overline{A}_{j}$.
Since $E$ is projective and injective in
$\mathrm{Mod}_{\mathcal{P}}(AQ_{n+1}/I)$ by \cite[Lemma 2.5]{ma18}, the object
$Z :={\Sigma}\oplus E$ is also partial $n$-tilting in $\mathrm{Mod}(AQ_{n+1}/I)$ and $Z^{\perp_\infty} = \Sigma^{\perp_\infty}$. Since $E$ is contractible when viewed as a cochain complex, we have $\Phi(Z) \cong \Phi(\Sigma)$ in $\mathbf{D}(A)$, and thus it is sufficient to establish the condition $(iii)$ for $Z$ instead of for $\Sigma$.

We first claim that $Z^{\perp_{\infty}}\cap\mathrm{Mod}_{\mathcal{E}}(AQ_{n+1}/I)\subseteq \mathrm{Gen}(Z)$ in $\mathrm{Mod}(AQ_{n+1}/I)$.
Indeed, write $\Sigma$ as
$$P_{n}\s{\sigma_{n}}\rightarrow P_{n-1}\rightarrow\cdots \rightarrow P_{1}\s{\sigma_{1}} \rightarrow P_0$$
and let $X\in\mathrm{Mod}_{\mathcal{E}}(AQ_{n+1}/I)$ be of the form
$$X_{n}\s{d_{n}}\rightarrow X_{n-1}\rightarrow\cdots \rightarrow X_{1}\s{d_{1}}\rightarrow X_{0}$$
such that $X\in Z^{\perp_{\infty}}$, that is, $\mathrm{Coker}(d_{1})\in \mathcal{D}_{\Sigma}=\mathrm{Pres}^{n-1}(T)$.
Hence, there is a surjection $p : T^{(I_{0})}\rightarrow \mathrm{Coker}(d_{1})$.
Since ${\Sigma}^{(I_{0})}\in \mathrm{Mod}_{\mathcal{P},\mathcal{E}}(AQ_{n+1}/I)$ and $X\in\mathrm{Mod}_{\mathcal{P},\mathcal{E}}(AQ_{n+1}/I)$, we can lift $p_{0}$ to a morphism ${\Sigma}^{(I_{0})}\rightarrow X$:
$$\xymatrix{
  P^{(I_{0})}_{n} \ar[d]_{} \ar[r]^{\sigma^{(I_{0})}_{n}} & P^{(I_{0})}_{n-1} \ar[d]_{} \ar[r]^{} & \cdots \ar[r]^{}& P^{(I_{0})}_{1} \ar[d]_{} \ar[r]^{\sigma^{(I_{0})}_{1}}  & P^{(I_{0})}_{0} \ar[d]_{} \ar[r]^{\sigma^{(I_{1})}_{0}} & T^{(I_{0})} \ar[d]_{p} \ar[r]^{} & 0  \\
  X_{n} \ar[r]^{d_{n}} & X_{n-1} \ar[r]^{d_{n-1}} & \cdots \ar[r]^{} & X_{1} \ar[r]^{d_{1}}& X_{0} \ar[r]^-{d_{0}} & \mathrm{Coker}(d_{1}) \ar[r]^{} & 0.   }
$$
Furthermore, take a surjection
$q_{i} : A^{(J_{i})}\rightarrow X_{i}$ for each $i = -n,-n+1,\ldots,-1$ and jointly extend them to a morphism
$E^{(J_{0})}\rightarrow X$ for a suitable cardinal $J_{0}$.
One can check that the summed morphism ${\Sigma}^{(I_{0})}\oplus E^{(J_{0})}\rightarrow X$ is surjective.
It follows that $X\in \mathrm{Gen}(Z)$.

Next, consider the complete cotorsion pair $({^{\perp_{1}}(Z^{\perp_{\infty}})}, Z^{\perp_{\infty}})$ induced by $Z$.
We claim that ${^{\perp_{1}}(Z^{\perp_{\infty}})}\cap Z^{\perp_{\infty}}=\mathrm{Add}(Z)$.
We already know that $\mathrm{Add}(Z)\subseteq {^{\perp_{1}}(Z^{\perp_{\infty}})}\cap Z^{\perp_{\infty}}$.
Conversely, let $M\in {^{\perp_{1}}(Z^{\perp_{\infty}})}\cap Z^{\perp_{\infty}}$.
Consider an $\mathrm{Add}(Z)$-precover $g:Y\rightarrow M$ of $M$, where $Y\in \mathrm{Add}(Z)$, that is, for any homomorphism $f:F\rightarrow M$ where 
$F$ is in $\mathrm{Add}(Z)$, there exists a homomorphism $h:F\rightarrow Y$ such that $f=gh$.
By condition $(\dag)$, we deduce that $M\in \mathrm{Mod}_{\mathcal{E}}(AQ_{n+1}/I)$.
Therefore, $g$ is an epimorphism because
$M\in Z^{\perp_{\infty}}\cap\mathrm{Mod}_{\mathcal{E}}(AQ_{n+1}/I)\subseteq \mathrm{Gen}(Z)$.
Then we obtain an exact sequence
$0 \rightarrow K \rightarrow Y\rightarrow  M \rightarrow 0$
with $K\in T^{\perp_{\infty}}$ by $(T2)$.
It follows that the sequence splits, and $M\in \mathrm{Add}(Z)$.
The claim holds.

Let $P$ be any projective module in $\mathrm{Mod}(AQ_{n+1}/I)$.
By the completeness of the cotorsion pair $({^{\perp_{1}}(Z^{\perp_{\infty}})}, Z^{\perp_{\infty}})$, there is a special $Z^{\perp_{\infty}}$-preenvelope,
$\psi: P\rightarrow  Z_{0}$, of $P$ with $C_{1} :=\mathrm{Coker}(\psi)\in {^{\perp_{1}}(Z^{\perp_{\infty}})}$.
Since $P\in {^{\perp_{1}}(Z^{\perp_{\infty}})}$, also $Z_{0}\in {^{\perp_{1}}(Z^{\perp_{\infty}})}$.
By the claim above, we see that $Z_{0}\in \mathrm{Add}(Z)$.
Iterating the above construction, we get an exact sequence
$$0\rightarrow P\rightarrow Z_{0}\rightarrow Z_{1}\rightarrow\cdots\rightarrow Z_{n-1}\rightarrow C_{n}\rightarrow 0.$$
with $Z_{i}\in \mathrm{Add}(Z)$ for $0 \leq i\leq n-1$ and $C_{n}$ in ${^{\perp_{1}}(Z^{\perp_{\infty}})}$.
By dimension shifting, we have
$\Ext^{i}_{AQ_{n+1}/I}(Z,C_{n})\cong \Ext^{i+n}_{AQ_{n+1}/I}(Z,P)$
for each $i>0$, hence $C_{n}\in {^{\perp_{1}}(Z^{\perp_{\infty}})}\cap Z^{\perp_{\infty}}=\mathrm{Add}(Z)$.

Finally, let $P$ be the $\mathrm{Mod}(AQ_{n+1}/I)$-module corresponding to the stalk cochain complex $(\cdots \to 0 \to 0 \to A \to 0 \to 0 \to \cdots)$ concentrated in degree $0$. Then $P$ is projective as an $\mathrm{Mod}(AQ_{n+1}/I)$-module by \cite[Lemma 2.4]{ma18}, and thus the preceding paragraph applies to it, showing that $A$ is in the smallest localizing subcategory of $\mathbf{D}(A)$ generated by $Z$. As $A^{\perp_\mathbb{Z}} = 0$, the condition $(iii)$ follows.

  $(iii) \implies (i):$ By Lemma~\ref{key lem}, we know that $\mathrm{Add}(\Sigma) \subseteq \Sigma^{\perp_{>0}}$. Under condition $(iii)$, $\Sigma$ is an $(n+1)$-term silting complex by Lemma~\ref{silting-complex-generate}.
\end{proof}

On the cosilting side we formulate a simpler criterion.

\begin{lemma}\label{equiv-conditions-co}
  Let $\Omega \in \CopresE{n}$ be an injective $n$-copresentation of an $n$-cosilting module $C = H^0(\Sigma)$. The following conditions are equivalent:
  \begin{enumerate}
    \item[(i)] $\Omega$ is a cosilting complex in $\mathbf{D}(A)$,
    \item[(ii)] ${}^{\perp_{\mathbb{Z}}}\Omega = 0$.
  \end{enumerate}
\end{lemma}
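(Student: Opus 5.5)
The plan is to mirror the proof of $(iii)\implies(i)$ in Lemma~\ref{equiv-conditions}, using the cosilting half of Lemma~\ref{silting-complex-generate}. Since $\Omega\in\CopresE{n}\subseteq\mathbf{K}^b(\mathrm{Inj}(A))$, that lemma reduces both directions to the behaviour of $\mathrm{Prod}(\Omega)$ with respect to ${}^{\perp_{>0}}\Omega$, together with the vanishing of ${}^{\perp_{\mathbb{Z}}}\Omega$.

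The direction $(i)\implies(ii)$ is immediate. If $\Omega$ is cosilting, then condition $(ii)$ of the cosilting part of Lemma~\ref{silting-complex-generate} says precisely that ${}^{\perp_{\mathbb{Z}}}\Omega=0$.

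For $(ii)\implies(i)$, it remains to verify $\mathrm{Prod}(\Omega)\subseteq{}^{\perp_{>0}}\Omega$, and we will do this with Lemma~\ref{key lem}$(ii)$.
\begin{enumerate}
\item[(a)] Reduce to products. The class ${}^{\perp_{>0}}\Omega$ is closed under direct summands, so it suffices to show $\Omega^J\in{}^{\perp_{>0}}\Omega$ for every set $J$.
\item[(b)] Realize the product in the right window. Products in $\mathbf{D}(A)$ of complexes of injectives are computed componentwise. Hence $\Omega^J$ is again a complex of injectives concentrated in degrees $0,\dots,n$, i.e. it lies in $\Copres{n}{}$. Products of modules are exact, so $H^i(\Omega^J)=H^i(\Omega)^J$; in particular $H^0(\Omega^J)=C^J$, and the intermediate cohomology still vanishes.
\item[(c)] Check the module condition. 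By Lemma~\ref{key lem}$(ii)$, $\Omega^J\in{}^{\perp_{>0}}\Omega$ if and only if $C^J\in\mathcal{B}_\Omega$. Since $C$ is $n$-cosilting, $\mathcal{B}_\Omega=\mathrm{Copres}^{n-1}(C)$. This class contains $C^J$: take the sequence $0\to C^J\to C^J\to0\to\cdots$, all of whose terms lie in $\mathrm{Prod}(C)$.
\end{enumerate}
Combining (a)--(c) with the hypothesis ${}^{\perp_{\mathbb{Z}}}\Omega=0$, Lemma~\ref{silting-complex-generate} shows that $\Omega$ is cosilting, and it is $(n+1)$-term since $\Omega\in\CopresE{n}$.

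The only step needing care is (b): the derived product must be represented by the componentwise product, so that Lemma~\ref{key lem} applies to a complex in $\Copres{n}{}$. This holds because bounded complexes of injectives are K-injective and products of injectives are injective. Everything else is formal, which explains why this criterion is simpler than its silting counterpart: the cosilting class is automatically closed under products, so no analogue of $(\dag)$ or Lemma~\ref{lem tilting} is required.
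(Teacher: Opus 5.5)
Your proposal is correct and follows the paper's proof. Direction (i)$\Rightarrow$(ii) is read off from the cosilting half of Lemma~\ref{silting-complex-generate}. For (ii)$\Rightarrow$(i), the inclusion $\mathrm{Prod}(\Omega)\subseteq{}^{\perp_{>0}}\Omega$ comes from Lemma~\ref{key lem} together with $\mathcal{B}_\Omega=\mathrm{Copres}^{n-1}(C)$; the paper leaves this implicit, and you spell it out via $\Omega^J$ and $H^0(\Omega^J)=C^J$. Keep your remark that $\Omega^J$ still has vanishing intermediate cohomology, because the truncation triangle in the proof of Lemma~\ref{key lem} needs it.
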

\begin{proof}
  $(i) \implies (ii):$ This is Lemma~\ref{silting-complex-generate}.
  
  $(ii) \implies (i):$ We already know that $\mathrm{Prod}(\Omega) \subseteq {}^{\perp_{>0}}\Omega$ by Lemma~\ref{key lem}, the rest follows from Lemma~\ref{silting-complex-generate}.
\end{proof}

\begin{lemma}\label{gen-cogen-module}
  Let $T$ be an $n$-silting module with respect to a projective $n$-presentation $\Sigma \in \PresP{n}$. Then $\Sigma^{\perp_{\mathbb{Z}}}$ contains no non-zero $A$-module.

  Let $C$ be an $n$-cosilting module with respect to an injective $n$-copresentation $\Omega \in \CopresE{n}$. Then ${}^{\perp_{\mathbb{Z}}}\Sigma$ contains no non-zero $A$-module.
\end{lemma}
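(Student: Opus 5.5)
Let $M$ be an $A$-module, viewed as a stalk complex in degree $0$, and suppose $M \in \Sigma^{\perp_{\mathbb{Z}}}$. The aim is to show $M=0$.

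Since $M$ is a module, $M[0] \in \Pres{n}{}$ trivially, and $\Hom_{\mathbf{D}(A)}(\Sigma,M[i])=0$ for all $i>0$ gives $M \in \mathcal{D}_\Sigma$, either directly from the description of $\mathcal{D}_\Sigma$ in \S\ref{subsec:nsilt} or via Lemma~\ref{key lem}. Because $T$ is $n$-silting with respect to $\Sigma$, we have $\mathcal{D}_\Sigma=\mathrm{Pres}^{n-1}(T) \subseteq \mathrm{Gen}(T)$. So there is an epimorphism $p\colon T^{(I)} \to M$ for some set $I$.

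Next, use the vanishing for $i=0$, namely $\Hom_{\mathbf{D}(A)}(\Sigma,M)=0$. Since $\Sigma \in \mathbf{D}^{\leq 0}$ and $M$ is concentrated in degree $0$, the truncation triangle shows that
$$\Hom_{\mathbf{D}(A)}(\Sigma,M)\cong \Hom_A(H^0(\Sigma),M)=\Hom_A(T,M).$$
Hence $\Hom_A(T,M)=0$, and therefore $\Hom_A(T^{(I)},M)\cong \Hom_A(T,M)^I=0$. In particular $p=0$. As $p$ is surjective, $M=0$. Note that the vanishing for $i<0$ is not even needed.

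The cosilting statement is dual. Take an $A$-module $M \in {}^{\perp_{\mathbb{Z}}}\Omega$; here the statement's ${}^{\perp_{\mathbb{Z}}}\Sigma$ is presumably a typo for ${}^{\perp_{\mathbb{Z}}}\Omega$. The vanishing for $i>0$ gives $M \in \mathcal{B}_\Omega=\mathrm{Copres}^{n-1}(C)\subseteq \mathrm{Cogen}(C)$, so there is a monomorphism $M \hookrightarrow C^{J}$. Since $\Omega \in \mathbf{D}^{\geq 0}$ with $H^0(\Omega)=C$, we get
$$\Hom_{\mathbf{D}(A)}(M,\Omega)\cong \Hom_A(M,C),$$
which vanishes. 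Hence $\Hom_A(M,C^J)=0$, so the monomorphism is zero and $M=0$.

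There is no real obstacle here. The only point to check is the identification of the degree-$0$ Hom with $\Hom_A(H^0(-),-)$, which follows from the standard t-structure, because $\Sigma$ has no cohomology in positive degrees and $\Omega$ has none in negative degrees.
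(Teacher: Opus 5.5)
Your proposal is correct and follows essentially the same argument as the paper: you use $\mathcal{D}_\Sigma=\mathrm{Pres}^{n-1}(T)\subseteq\mathrm{Gen}(T)$ together with $\Hom_{\mathbf{D}(A)}(\Sigma,M)\cong\Hom_A(T,M)=0$ to force $M=0$, and you dualize for the cosilting case. You are also right that ${}^{\perp_{\mathbb{Z}}}\Sigma$ in the statement should read ${}^{\perp_{\mathbb{Z}}}\Omega$.
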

\begin{proof}
   Let $M \in \Sigma^{\perp_{\mathbb{Z}}}$ be an $A$-module $M$. First, as $\Sigma^{\perp_{\mathbb{Z}}} \subseteq \Sigma^{\perp_{>0}}$, we have $M \in \mathcal{D}_\Sigma$. But since $0 = \Hom_{\mathbf{D}(A)}(\Sigma,M) \cong \Hom_{A}(T,M)$, we see quickly that $M \in \mathrm{Pres}^{n-1}(T) \subseteq \mathrm{Gen}(T)$ implies the desired $M=0$. The other claim follows dually.
\end{proof}

In general, there seems to be no reason why $\Sigma^{\perp_{\mathbb{Z}}}$ not containing any non-zero $A$-module, even in presence of the condition $\mathrm{Add}(\Sigma) \subseteq \Sigma^{\perp_{>0}}$, should imply that $\Sigma$ is a silting complex; similarly for the dual cosilting claim. However, one can see that this is sufficient for particular classes of rings. Recall that $\Sigma^{\perp_{\mathbb{Z}}}$ is always a \textit{colocalizing subcategory} of $\mathbf{D}(A)$, that is, a full triangulated subcategory closed under all products, and that ${}^{\perp_{\mathbb{Z}}} \Omega$ is a \textit{localizing subcategory} of $\mathbf{D}(A)$, that is, a full triangulated subcategory closed under all coproducts. 

Consider the following condition:

$(\ddag)$ Any non-zero localizing and any non-zero colocalizing subcategory of $\mathbf{D}(A)$ contains a non-zero $A$-module.

We are ready to state our two main Theorems whose proofs follow by a straightforward combination of the previous results of this section.

\begin{theorem}\label{thm1}
Let $T$ be an $A$-module and $\Sigma$ be a projective $n$-presentation of $T$.
There is an injective map
      $$\left \{
      \begin{tabular}{ccc} \text{$n$-tilting modules in $\mathrm{Mod}(AQ_{n+1}/I)$} \\ \text{which belong to $\mathrm{Mod}_{\mathcal{P},\mathcal{E}}(AQ_{n+1}/I)$} \\ \text{up to equivalence} \end{tabular}\right \}
      \hookrightarrow
      \left \{\begin{tabular}{ccc} \text{$n$-silting $A$-modules} \\ \text{up to equivalence} \end{tabular}
      \right \}$$
     induced by the assignment
       $$\Sigma \mapsto T.$$
Assume that $A$ satisfies the condition $(\ddag)$, or the condition $(\dag)$ holds for the projective $n$-presentation $\Sigma$ associated to any $n$-silting $A$-module. Then this assignment is a one-to-one correspondence.
\end{theorem}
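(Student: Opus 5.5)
By Theorem~\ref{mv-bijection}, an $n$-tilting module in $\mathrm{Mod}(AQ_{n+1}/I)$ lying in $\mathrm{Mod}_{\mathcal{P},\mathcal{E}}(AQ_{n+1}/I)$ is, via $\Psi$, the same thing as an $(n+1)$-term silting complex $\Sigma \in \PresP{n}$ with vanishing intermediate cohomology. Equivalence on both sides is measured by $\mathrm{Add}$. So the first task is to define the map on representatives $\Sigma \mapsto T = H^0(\Sigma)$. That $T$ is $n$-silting with respect to $\Sigma$ is exactly Lemma~\ref{one-implication}(i).

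Next I will check that the map is well defined and injective on equivalence classes. I take equivalence of $n$-silting modules to mean equality of the silting classes $\mathrm{Pres}^{n-1}(T)$, equivalently $\mathrm{Add}(T)$. Let $\Sigma, \Sigma'$ be silting, with $T = H^0(\Sigma)$ and $T' = H^0(\Sigma')$.

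For injectivity, suppose $\mathrm{Pres}^{n-1}(T) = \mathrm{Pres}^{n-1}(T')$. Then $\mathcal{D}_\Sigma = \mathcal{D}_{\Sigma'}$. By Lemma~\ref{key lem}, for any $X \in \Pres{n}{}$ we get $X \in \Sigma^{\perp_{>0}}$ iff $H^0(X) \in \mathcal{D}_\Sigma = \mathcal{D}_{\Sigma'}$ iff $X \in \Sigma'^{\perp_{>0}}$.

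The silting t-structure aisles $\Sigma^{\perp_{>0}}$ and $\Sigma'^{\perp_{>0}}$ are both contained in $\mathbf{D}^{\leq 0}$ and contain $\mathbf{D}^{\leq -n}$, since $\Sigma$ is $(n+1)$-term. An object $Y$ of such an aisle is then determined by the truncation triangle $\tau^{\leq -n}Y \to Y \to \tau^{>-n}Y$, whose right-hand term is a complex cohomologically in degrees $[-n+1,0]$. Up to quasi-isomorphism that term lies in $\Pres{n}{}$, but it need not have vanishing intermediate cohomology; this is fine, because Lemma~\ref{key lem} applies to all of $\Pres{n}{}$.

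Since $\Sigma[n]$ and $\Sigma'[n]$ have cohomology in degrees at most $0$, the term $\mathrm{Hom}(\Sigma, (\tau^{\leq -n}Y)[i])$ vanishes for $i>0$. Hence $Y \in \Sigma^{\perp_{>0}}$ iff $\tau^{>-n}Y \in \Sigma^{\perp_{>0}}$, and the same holds for $\Sigma'$. The two aisles therefore coincide, so $\Sigma$ and $\Sigma'$ are equivalent, i.e.\ $\mathrm{Add}(\Sigma)=\mathrm{Add}(\Sigma')$.

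For well-definedness, $\mathrm{Add}(\Sigma)=\mathrm{Add}(\Sigma')$ gives $\Sigma^{\perp_{>0}}=\Sigma'^{\perp_{>0}}$. Restricting to stalk modules yields $\mathcal{D}_\Sigma=\mathcal{D}_{\Sigma'}$, which is equality of the silting classes.

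For surjectivity under the extra hypotheses, let $T$ be $n$-silting with respect to some $\Sigma \in \PresP{n}$. Under $(\dag)$ for $\Sigma$, Lemma~\ref{equiv-conditions} $(ii)\Rightarrow(i)$ shows $\Sigma$ is silting. Under $(\ddag)$, $\Sigma^{\perp_{\mathbb{Z}}}$ is a colocalizing subcategory containing no non-zero $A$-module by Lemma~\ref{gen-cogen-module}, so it is zero, and Lemma~\ref{equiv-conditions} $(iii)\Rightarrow(i)$ gives that $\Sigma$ is silting. In either case $\Sigma$ lies in $\PresP{n}$, so it corresponds to an $n$-tilting module in $\mathrm{Mod}_{\mathcal{P},\mathcal{E}}(AQ_{n+1}/I)$ mapping to $T$.

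The main obstacle is the injectivity step, namely reconstructing the whole aisle $\Sigma^{\perp_{>0}}$ from $\mathcal{D}_\Sigma$. The first thing to try is the truncation argument above, which only needs Lemma~\ref{key lem} for arbitrary complexes in $\Pres{n}{}$ together with the amplitude bounds coming from $\Sigma$ being $(n+1)$-term.
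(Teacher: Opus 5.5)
Your construction of the map via Lemma~\ref{one-implication}, the well-definedness check, and the surjectivity argument under $(\dag)$ or $(\ddag)$ (via Lemmas~\ref{gen-cogen-module} and \ref{equiv-conditions}) are exactly the ``straightforward combination'' the paper intends. The gap is in your injectivity step. There you apply Lemma~\ref{key lem} to $\tau^{>-n}Y$ and explicitly allow it to have non-zero intermediate cohomology.

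The lemma is typeset with $\Pres{n}{}$, but its proof uses the triangle $H^{-n}(X)[n]\to X\to H^0(X)[0]\to$. That triangle exists only when $H^i(X)=0$ for $-n<i<0$. So the lemma is valid only for $X\in\Pres{n}{E}$, and the paper only ever applies it in that setting. For general $X$ it is false. Take $n\ge 2$ and $X=P_n[n-1]$, the module $P_n$ placed in degree $-n+1$. Then $H^0(X)=0\in\mathcal{D}_\Sigma$, but
$$\Hom_{\mathbf{D}(A)}(\Sigma,X[1])=\Hom_{\mathbf{D}(A)}(\Sigma,P_n[n])\cong\Hom_A(P_n,P_n)/\{h\sigma_n\mid h\in\Hom_A(P_{n-1},P_n)\}.$$
This is non-zero unless $\sigma_n$ is a split monomorphism, which fails, for example, for the projective resolution of an $n$-tilting module of projective dimension exactly $n$. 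Your argument would in effect prove $\Sigma^{\perp_{>0}}=\{Y\in\mathbf{D}^{\le 0}\mid H^0(Y)\in\mathcal{D}_\Sigma\}$, and this is false in general.

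You do not need to reconstruct the whole aisle. Apply Lemma~\ref{key lem} only to $\Sigma'$ itself, which does lie in $\PresP{n}$:
\begin{itemize}
\item Since $H^0(\Sigma')=T'\in\mathcal{D}_{\Sigma'}=\mathcal{D}_\Sigma$, you get $\Sigma'\in\Sigma^{\perp_{>0}}$, hence $\Sigma'[i]\in\Sigma^{\perp_{>0}}$ for all $i\ge 0$.
\item Because $\Sigma^{\perp_{\le 0}}=(\Sigma^{\perp_{>0}})^{\perp_0}$, this gives $\Sigma^{\perp_{\le 0}}\subseteq(\Sigma')^{\perp_{\le 0}}$.
\item Therefore $(\Sigma')^{\perp_{>0}}={}^{\perp_0}((\Sigma')^{\perp_{\le 0}})\subseteq{}^{\perp_0}(\Sigma^{\perp_{\le 0}})=\Sigma^{\perp_{>0}}$.
\end{itemize}
By symmetry the two silting t-structures coincide, so $\Sigma$ and $\Sigma'$ are equivalent, which is the injectivity you need.
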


\begin{theorem}\label{thm1co}
Let $C$ be an $A$-module and $\Omega$ be an injective $n$-copresentation of $C$.
There is an injective map
      $$\left \{
      \begin{tabular}{ccc} \text{$n$-cotilting modules in $\mathrm{Mod}(AQ_{n+1}/I)$} \\ \text{which belong to $\mathrm{Mod}_{\mathcal{P},\mathcal{I}}(AQ_{n+1}/I)$} \\ \text{up to equivalence} \end{tabular}\right \}
      \hookrightarrow
      \left \{\begin{tabular}{ccc} \text{$n$-cosilting $A$-modules} \\ \text{up to equivalence} \end{tabular}
      \right \}$$
     induced by the assignment
       $$\Omega \mapsto C.$$
Assume that $A$ satisfies the condition $(\ddag)$, then this assignment is a one-to-one correspondence.
\end{theorem}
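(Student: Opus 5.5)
The statement to prove is the cosilting analogue, Theorem~\ref{thm1co}. I will prove it by assembling the preceding lemmas, dualizing the proof of Theorem~\ref{thm1}. Work with $a=0$, $b=n$. By Theorem~\ref{mv-bijection}, $n$-cotilting modules in $\mathrm{Mod}_{\mathcal{I}}(AQ_{n+1}/I)$, up to equivalence, correspond via $\Psi$ to $(n+1)$-term cosilting complexes up to equivalence. Restricting to those with vanishing intermediate cohomology, I read the left-hand side as cosilting complexes $\Omega\in\CopresE{n}$ up to equivalence.

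\emph{Well-definedness.} Let $\Omega\in\CopresE{n}$ be cosilting. Lemma~\ref{one-implication}(ii) shows that $C=H^0(\Omega)$ is $n$-cosilting with respect to $\Omega$. If $\Omega$ and $\Omega'$ are equivalent, then $\mathrm{Prod}(\Omega)=\mathrm{Prod}(\Omega')$. Hence ${}^{\perp_{>0}}\Omega={}^{\perp_{>0}}\Omega'$, and Lemma~\ref{key lem}(ii) gives $\mathcal{B}_\Omega=\mathcal{B}_{\Omega'}$. So the $n$-cosilting classes $\mathrm{Copres}^{n-1}(C)=\mathcal{B}_\Omega$ coincide, which is what I take equivalence of $n$-cosilting modules to mean.

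\emph{Injectivity.} Suppose $\mathcal{B}_\Omega=\mathcal{B}_{\Omega'}$ for two cosilting complexes in $\CopresE{n}$. By Lemma~\ref{key lem}(ii), objects of $\Copres{n}{}$ lie in ${}^{\perp_{>0}}\Omega$ if and only if they lie in ${}^{\perp_{>0}}\Omega'$. In particular $\Omega'\in{}^{\perp_{>0}}\Omega$ and $\Omega\in{}^{\perp_{>0}}\Omega'$, and the same holds for products of copies, since these stay in $\Copres{n}{}$. I then want to conclude $\mathrm{Prod}(\Omega)=\mathrm{Prod}(\Omega')$, i.e.\ that the cosilting t-structures agree. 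The cleanest route is to note that the coaisle ${}^{\perp_{\le0}}\Omega$ is determined by the aisle, and that the aisle ${}^{\perp_{>0}}\Omega$ is generated, via cones and extensions, by its intersection with complexes in degrees $[0,n]$. Alternatively, I can use the standard fact that for cosilting complexes $\Omega'\in{}^{\perp_{>0}}\Omega$ and $\Omega\in{}^{\perp_{>0}}\Omega'$ force equivalence. This follows by comparing cosilting t-structures: ${}^{\perp_{>0}}\Omega'\subseteq{}^{\perp_{>0}}\Omega$ holds because $\Omega$ lies in the coheart-side of $\Omega'$, and symmetrically.

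\emph{Surjectivity under $(\ddag)$.} Let $C$ be $n$-cosilting with respect to $\Omega\in\CopresE{n}$. The subcategory ${}^{\perp_{\mathbb{Z}}}\Omega$ is localizing. By Lemma~\ref{gen-cogen-module} (cosilting part) it contains no non-zero $A$-module, so $(\ddag)$ forces ${}^{\perp_{\mathbb{Z}}}\Omega=0$. Lemma~\ref{equiv-conditions-co} then shows that $\Omega$ is cosilting, and it maps to $C$. The main obstacle is the injectivity step, namely that equality of $\mathcal{B}_\Omega$ forces equivalence of the complexes. I would handle it as in the $n=1$ case, via the description of ${}^{\perp_{>0}}\Omega$ through truncations in Lemma~\ref{key lem}.
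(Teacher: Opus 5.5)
Your proof of well-definedness and of surjectivity under $(\ddag)$ is exactly the paper's argument. The paper's proof is only the remark that the theorem is a ``straightforward combination'' of Theorem~\ref{mv-bijection}, Lemma~\ref{one-implication}(ii), Lemma~\ref{gen-cogen-module}, Lemma~\ref{equiv-conditions-co} and $(\ddag)$, and you assemble precisely these. You also rightly read the left-hand side as $\mathrm{Mod}_{\mathcal{I},\mathcal{E}}(AQ_{n+1}/I)$, i.e.\ cosilting complexes in $\CopresE{n}$.

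The paper says nothing explicit about injectivity, and that is where your proposal falls short. Your reduction is correct: Lemma~\ref{key lem}(ii), applied to $\Omega^{J}$ and $(\Omega')^{J}$ in $\CopresE{n}$, turns $\mathcal{B}_\Omega=\mathcal{B}_{\Omega'}$ into $\mathrm{Prod}(\Omega')\subseteq{}^{\perp_{>0}}\Omega$ and $\mathrm{Prod}(\Omega)\subseteq{}^{\perp_{>0}}\Omega'$. But you cannot apply that lemma to arbitrary objects of $\Copres{n}{}$. Its proof uses the triangle $H^0(Y)\to Y\to H^n(Y)[-n]\to H^0(Y)[1]$, which requires vanishing intermediate cohomology, and the statement fails without it. For example, take $n\geq 2$, $\Omega=E[-n]$ with $E$ an injective cogenerator, and $Y=M[-1]$ with $M\neq 0$. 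Then $H^0(Y)=0\in\mathcal{B}_\Omega$, yet $\Hom_{\mathbf{D}(A)}(Y,\Omega[n-1])\cong\Hom_A(M,E)\neq 0$. So your ``cleanest route'' and your plan to argue ``as in the $n=1$ case via truncations'' do not work for $n\geq 2$. The ``coheart-side'' sentence is also not an argument.

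The closing step needs a coresolution argument.
\begin{itemize}
\item Let $N\in{}^{\perp_{>0}}\Omega'$. Complete the universal map $N\to(\Omega')^{\Hom(N,\Omega')}$ to a triangle $N\to\Omega'_0\to N_1\to N[1]$. Since $\Omega'$ is cosilting, $N_1\in{}^{\perp_{>0}}\Omega'$.
\item Iterating gives triangles $N_i\to\Omega'_i\to N_{i+1}\to N_i[1]$ with $\Omega'_i\in\mathrm{Prod}(\Omega')$.
\item Apply $\Hom_{\mathbf{D}(A)}(-,\Omega[j])$ for $j>0$ and use $\mathrm{Prod}(\Omega')\subseteq{}^{\perp_{>0}}\Omega$. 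This gives injections $\Hom(N,\Omega[j])\hookrightarrow\Hom(N_k,\Omega[j+k])$.
\item The target vanishes once $j+k>n$. Indeed, $N_k\in{}^{\perp_{>0}}\Omega'\subseteq\mathbf{D}^{\geq 0}$, being the right orthogonal of ${}^{\perp_{\leq 0}}\Omega'\supseteq\mathbf{D}^{<0}$. Meanwhile $\Omega[j+k]$ is a bounded complex of injectives in negative degrees.
\end{itemize}
Hence ${}^{\perp_{>0}}\Omega'\subseteq{}^{\perp_{>0}}\Omega$, and by symmetry the two classes coincide. So the cosilting t-structures agree and $\Omega$, $\Omega'$ are equivalent. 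This is the derived-category form of the fact that two $n$-cotilting modules in $\mathrm{Mod}(AQ_{n+1}/I)$ lying in each other's ${}^{\perp_\infty}$ are equivalent. That fact is presumably what the paper's appeal to Theorem~\ref{mv-bijection} tacitly uses.
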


The condition $(\ddag)$ is satisfied in many settings of interest. In fact, it seems non-trivial to find an explicit example of a ring $A$ for which $(\ddag)$ fails.

\begin{lemma}\label{dagger-holds}
  The condition $(\ddag)$ holds if $A$ is any of the following:
  \begin{enumerate}
    \item left hereditary,
    \item commutative noetherian.
  \end{enumerate}
\end{lemma}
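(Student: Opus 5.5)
We treat the two cases separately; in each, the key is to pass from a non-zero (co)localizing subcategory $\mathcal{L}$ of $\mathbf{D}(A)$ to a non-zero module in it.

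\emph{Hereditary case.} If $A$ is left hereditary, every object $X \in \mathbf{D}(A)$ is quasi-isomorphic to the direct sum, and also to the direct product, of its shifted cohomologies:
$$X \cong \bigoplus_{i} H^i(X)[-i] \cong \prod_i H^i(X)[-i].$$
This is the standard fact that complexes over hereditary rings split, because the obstruction classes lie in $\Ext^2$, which vanishes. Now let $\mathcal{L}$ be a localizing or colocalizing subcategory and pick $0 \neq X \in \mathcal{L}$. Since $\mathcal{L}$ is triangulated, it is closed under direct summands; for localizing subcategories this is the Eilenberg swindle using coproducts, and dually for colocalizing ones using products. Hence each $H^i(X)[-i]$ lies in $\mathcal{L}$, and shifting back gives the non-zero module $H^i(X) \in \mathcal{L}$ for some $i$ with $H^i(X) \neq 0$. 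This case is routine.

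\emph{Commutative noetherian case.} Here I would invoke Neeman's classification together with the support and cosupport theory of Benson--Iyengar--Krause. By Neeman, localizing subcategories of $\mathbf{D}(A)$ correspond to subsets of $\operatorname{Spec} A$ via support. Concretely, if $0 \neq X \in \mathcal{L}$, choose $\pp \in \operatorname{supp} X$; then $\kappa(\pp) \otimes^{\mathbf{L}}_A X$ is non-zero and lies in $\mathcal{L}$, since it is built from $X$ by homotopy colimits and tensoring. This object is a non-zero complex of $\kappa(\pp)$-vector spaces, hence a direct sum of shifts of $\kappa(\pp)$. As in the hereditary argument, closure under summands and shifts then yields $\kappa(\pp) \in \mathcal{L}$, which is a non-zero $A$-module.

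For a colocalizing $\mathcal{L}$, I would use Neeman's classification of colocalizing subcategories via cosupport. Take $0 \neq X \in \mathcal{L}$ and $\pp \in \operatorname{cosupp} X$. Then $\mathbf{R}\Hom_A(\kappa(\pp), X)$ is non-zero, and the task is to show it lies in $\mathcal{L}$. It is again a complex of $\kappa(\pp)$-vector spaces, so it is a product, equivalently a sum, of shifts of $\kappa(\pp)$-vector spaces, and a non-zero vector space $\kappa(\pp)^{(J)}$ then lies in $\mathcal{L}$ as a summand.

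The main obstacle is exactly this step: showing $\mathbf{R}\Hom_A(\kappa(\pp), X) \in \mathcal{L}$. Colocalizing subcategories are not obviously closed under $\mathbf{R}\Hom_A(-, -)$ out of an arbitrary complex. I would first try to resolve $\kappa(\pp)$: write it as built from $A$ using cones, shifts and coproducts, via the Koszul complex on generators of $\pp$ followed by localization as a homotopy colimit. Under $\mathbf{R}\Hom_A(-, X)$, coproducts become products and homotopy colimits become homotopy limits, both of which colocalizing subcategories are closed under. Failing that, I would simply cite Neeman's theorem that colocalizing subcategories are determined by cosupport. Since $\mathcal{L}$ is non-zero, it is then the colocalizing subcategory attached to some non-empty set of primes containing a $\pp$, and so it contains the module $\kappa(\pp)$.
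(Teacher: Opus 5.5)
Your proof is correct. The hereditary case is the paper's argument. The commutative noetherian case takes a somewhat different route.

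The paper simply cites Neeman's classification: every localizing (resp. colocalizing) subcategory is generated (resp. cogenerated) by the residue fields $\kappa(\pp)$ it contains. A non-zero one must therefore contain some $\kappa(\pp)$. You use only the \emph{detection} part of that theory: a non-zero $X$ has $\kappa(\pp)\otimes^{\mathbf{L}}_A X\neq 0$ (resp.\ $\mathbf{R}\Hom_A(\kappa(\pp),X)\neq 0$) for some $\pp$. You then extract a module formally, by splitting a complex over the field $\kappa(\pp)$ and using closure under summands. This shows that the passage from detection to $(\ddag)$ is formal. The deep input is still a citation, though, especially cosupport detection, which is the hard part of Neeman's colocalizing paper or of Benson--Iyengar--Krause. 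So your route buys transparency rather than economy.

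The step you flagged as the main obstacle is immediate. Fix $X\in\mathcal{L}$. The class $\{Y\in\mathbf{D}(A)\mid \mathbf{R}\Hom_A(Y,X)\in\mathcal{L}\}$ is a localizing subcategory, because $\mathbf{R}\Hom_A(-,X)$ is exact and turns coproducts into products. It contains $A$, so it is all of $\mathbf{D}(A)$.

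Likewise, $\{Y\mid Y\otimes^{\mathbf{L}}_A X\in\mathcal{L}\}$ is localizing and contains $A$. That is the precise form of your phrase ``built from $X$ by homotopy colimits and tensoring'' in the localizing case.

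So no explicit construction of $\kappa(\pp)$ from $A$ is needed. This matters, because the one you suggest does not in general give $\kappa(\pp)$: the Koszul complex on generators of $\pp$, localized at $\pp$, has non-zero higher homology unless those generators form an $A_\pp$-regular sequence. Your fallback of citing Neeman's classification of colocalizing subcategories is exactly what the paper does.
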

\begin{proof}
    \textit{(1)} If $A$ is left hereditary then any object of $\mathbf{D}(A)$ is isomorphic to a direct sum of shifts of its cohomology modules, see e.g. \cite[\S 1.6]{kra07}, from which the first claim immediatelly follows.

    \textit{(2)} If $A$ is commutative noetherian then any localizing (resp. colocalizing) subcategory is generated (resp. cogenerated) by residue field modules, see \cite{nee11}. 
\end{proof}

\begin{definition}
  Let us say that an $(n+1)$-term silting complex $\Sigma$ \textit{represents} an $n$-silting module if $\Sigma \in \PresP{n}$. By Lemma~\ref{one-implication}, this amounts precisely to the fact that $H^0(\Sigma)$ is an $n$-silting module and $\Sigma$ is isomorphic to a projective $n$-presentation witnessing this fact. Dually, we speak about $(n+1)$-term cosilting complex which \textit{represent} an $n$-cosilting module.
\end{definition}

We recall that a short exact sequence in $\mathrm{Mod}(A)$ is said to be \emph{pure-exact}
if the covariant functor $\Hom_{A}(F,-)$ preserves its exactness for every finitely presented module $F$.
An $A$-module $U$ is \emph{pure-injective} if the contravariant functor
$\Hom_{A}(-,U)$ preserves the exactness of every pure-exact sequence.
In the following, we prove that all $n$-cosilting modules are pure-injective.
\begin{corollary}\label{pure-injective} 
Assume that $A$ satisfies the condition $(\ddag)$. If $C$ is an $n$-cosilting $A$-module with respect to $\Omega$,
then $C$ is pure-injective.
\end{corollary}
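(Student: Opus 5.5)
The plan is to pass through the cosilting complex and use the known pure-injectivity of cosilting objects. By Theorem~\ref{thm1co}, under $(\ddag)$, the injective $n$-copresentation $\Omega \in \CopresE{n}$ witnessing that $C$ is $n$-cosilting is an $(n+1)$-term cosilting complex in $\mathbf{D}(A)$. More directly: Lemma~\ref{gen-cogen-module} says ${}^{\perp_{\mathbb{Z}}}\Omega$ contains no non-zero module. This subcategory is localizing, so $(\ddag)$ forces ${}^{\perp_{\mathbb{Z}}}\Omega=0$, and Lemma~\ref{equiv-conditions-co} then shows that $\Omega$ is a cosilting complex.

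Next I would invoke the known fact that cosilting complexes are pure-injective objects of $\mathbf{D}(A)$. This is due to Marks--Vit\'oria, and also follows from the fact that a cosilting t-structure has a Grothendieck heart and is cogenerated by a pure-injective object. With this, $\Omega$ is pure-injective in $\mathbf{D}(A)$, meaning that for every set $J$ the summation map $\Omega^{(J)}\to\Omega$ factors through the canonical map $\Omega^{(J)}\to\Omega^J$.

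Alternatively, one can work in $\mathrm{Mod}(AQ_{n+1}/I)$. By Theorem~\ref{mv-bijection}, $\Omega$ corresponds to an $n$-cotilting module there, and cotilting modules over any ring are pure-injective (Bazzoni, \v{S}\v{t}ov\'{\i}\v{c}ek).

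The last step is to transfer pure-injectivity from $\Omega$ to $C=H^0(\Omega)$. Since $\Omega\in\Copres{n}{}$ has vanishing intermediate cohomology, there is a truncation triangle $C[0]\to\Omega\to H^n(\Omega)[-n]\xrightarrow{+}$. Here $H^0$ commutes with products and coproducts, and $C$ is a module in degree $0$. I would check the Jensen--Lenzing criterion: the summation $C^{(J)}\to C$ extends along $C^{(J)}\to C^J$. I would obtain this by applying $H^0$ to the factorization $\Omega^{(J)}\to\Omega^J\to\Omega$, using that $H^0(\Omega^{(J)})=C^{(J)}$ and $H^0(\Omega^J)=C^J$. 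The maps induced on $H^0$ are exactly the canonical ones, so $C$ is pure-injective.

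The main obstacle is justifying the pure-injectivity of $\Omega$ in $\mathbf{D}(A)$, or of the corresponding cotilting module in $\mathrm{Mod}(AQ_{n+1}/I)$. A second subtlety is whether the Jensen--Lenzing criterion for the complex yields the factorization at the level of complexes rather than only up to homotopy. Since $H^0$ is a functor on $\mathbf{D}(A)$, the factorization in $\mathbf{D}(A)$ suffices, so I would use the triangulated characterization to avoid this issue.
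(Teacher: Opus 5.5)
Your proposal is correct and follows the paper's route: under $(\ddag)$ the copresentation $\Omega$ is a cosilting complex (Theorem~\ref{thm1co}, or directly via Lemmas~\ref{gen-cogen-module} and \ref{equiv-conditions-co}), cosilting complexes are pure-injective in $\mathbf{D}(A)$ by Marks--Vit\'oria, and pure-injectivity then passes to $C=H^0(\Omega)$. The only difference is in the last step: you verify it by hand, applying $H^0=\Hom_{\mathbf{D}(A)}(A,-)$ (which commutes with products and coproducts) to the Krause/Jensen--Lenzing factorization, whereas the paper simply cites Prest's Theorem 17.3.19; the truncation triangle you mention is not actually needed.
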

\begin{proof}
By Theorem \ref{thm1}, we know that $\Omega$ is a cosilting complex and
so is pure-injective as an object of $\mathbf{D}(A)$ by \cite[Proposition 3.10]{ma18}. Then $C=H^0(\Omega)$ is pure-injective by \cite[Theorem 17.3.19]{prest09}.
\end{proof}

\section{The case of commutative noetherian rings}
\subsection{Commutative Algebra setup} In this section, $A$ will always denote a commutative noetherian ring with Zariski spectrum $\operatorname{Spec}(A)$. Let us set up some notation, where $\pp \in \operatorname{Spec}(A)$ always denotes a prime ideal of $A$:
\begin{itemize}
  \item Let $Y_\mathfrak{p}=Y\oo_{A} A_\mathfrak{p}$ denote the localization of an object $Y \in \mathbf{D}(A)$ at $\mathfrak{p}$.
  \item The support of an $A$-module $M$ is defined as $\operatorname{Supp}(M) = \{\mathfrak{p} \in \operatorname{Spec}(A) \mid M_\mathfrak{p} \neq 0\}$.
  \item We let $V(\mathfrak{p}) = \{\mathfrak{q} \in \operatorname{Spec}(A) \mid \mathfrak{p} \subseteq \mathfrak{q}\}$ denote the Zariski closure of $\mathfrak{p}$ in $\operatorname{Spec}(A)$.
  \item The subcategory of all modules whose support is a subset of $V(\mathfrak{p})$ forms a localizing subcategory of $\mathrm{Mod}(A)$ with the induced torsion radical functor $\Gamma_{V(\mathfrak{p})}: \mathrm{Mod}(A) \to \mathrm{Mod}(A)$.
  \item The right derived functor $\mathbf{R}\Gamma_{V(\mathfrak{p})}(-): \mathbf{D}(A) \to \mathbf{D}(A)$ is called the \textit{total local cohomology} functor at $V(\mathfrak{p})$.
  \item For any $Y \in \mathbf{D}(A)$, we let
    $$H_\mathfrak{p}^i(Y) := H^i(\mathbf{R}\Gamma_{V(\mathfrak{p})}(Y))$$
  denote the $i$-th \textit{local cohomology} module at $\mathfrak{p}$.
  \item The $\mathfrak{p}$-local \textit{depth} of $Y$ is then defined as
  $$\operatorname{depth}_\mathfrak{p}(Y)=\inf \{i \in \mathbb{Z} \mid H^i_\mathfrak{p}(Y_\mathfrak{p}) \neq 0\}.$$
  \item In particular, recall that if $(A,\mathfrak{m},k)$ is a local ring with maximal ideal $\mathfrak{m}$ and residue field $k$, its \textit{depth} is defined as $\operatorname{depth}(A) = \operatorname{depth}_\mathfrak{m}(A)$.
\end{itemize}
In the Commutative Algebra setting, we have access to a full classification of silting and cosilting complexes. A \textit{monotone perversity} is a function $f: \operatorname{Spec}(A) \to \mathbb{Z}$ such that $f(\mathfrak{p}) \leq f(\mathfrak{q})$ whenever $\mathfrak{p}, \mathfrak{q} \in \operatorname{Spec}(A)$ are two prime ideals such that $\mathfrak{p} \subseteq \mathfrak{q}$. We say that a monotone perversity $\operatorname{Spec}(A)$ is \textit{bounded} if there is $N \in \mathbb{Z}$ such that $f(\mathfrak{p}) \leq N$ for all $\mathfrak{p} \in \operatorname{Spec}(A)$. Note that any monotone perversity is bounded if $(A,\mathfrak{m},k)$ is a local ring, as then we can choose $N = f(\mathfrak{m})$.

\begin{theorem}\label{classification-noeth}
  Let $A$ be a commutative noetherian ring. Then there is a bijection
  $$\left \{ \begin{tabular}{ccc} \text{bounded monotone} \\ \text{perversities on $\operatorname{Spec}(A)$} \end{tabular}\right \}  \xleftrightarrow{1-1}  \left \{ \begin{tabular}{ccc} \text{cosilting complexes} \\ \text{in $\mathbf{D}(A)$} \\ \text{up to equivalence} \end{tabular}\right \}.$$
  The bijection sends a monotone perversity $f$ to a cosilting t-structure $(\mathcal{U}_f,\mathcal{V}_f) = ({}^{\perp_{>0}}\Omega,{}^{\perp_{\leq 0}}\Omega)$ of the form
  $$\mathcal{U}_f = \{X \in \mathbf{D}(A) \mid \operatorname{Supp}(H^i(X)) \subseteq \{\mathfrak{p} \in \operatorname{Spec}(A) \mid f(\mathfrak{p})>i\} ~\forall i \in \mathbb{Z}\},$$
  $$\mathcal{V}_f = \{X \in \mathbf{D}(A) \mid \operatorname{depth}_{\mathfrak{p}}(X)\geq f(\mathfrak{p}) ~\forall \mathfrak{p} \in \operatorname{Spec}(A)\}.$$
  Moreover, any cosilting complex is of cofinite type, and thus the character duality 
  $$\Sigma \mapsto \Omega = \Sigma^+$$ 
  induces a bijection between equivalence classes of silting and cosilting complexes over $A$.

  Furthermore, under this correspondence:
  \begin{itemize}
    \item The $(n+1)$-term (co)silting complexes correspond precisely to those monotone perversities $f: \operatorname{Spec}(A) \to \mathbb{Z}$ such that $0 \leq f(\mathfrak{p}) \leq n$ for any $\mathfrak{p} \in \operatorname{Spec}(A)$.
    \item The $n$-tilting modules correspond precisely to those monotone perversities $f: \operatorname{Spec}(A) \to \mathbb{Z}$ such that $0 \leq f(\mathfrak{p}) \leq \operatorname{depth}(A_\mathfrak{p})$ for any $\mathfrak{p} \in \operatorname{Spec}(A)$.
  \end{itemize}

\end{theorem}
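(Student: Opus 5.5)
The theorem is a synthesis of known results, so the plan is to assemble it from the literature and check that the pieces fit, rather than prove anything from scratch.

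\textbf{Step 1: the perversity classification.} Start from the Alonso--Jerem\'ias--Saor\'in classification of compactly generated t-structures in $\mathbf{D}(A)$, for $A$ commutative noetherian. These are in bijection with sp-filtrations of $\operatorname{Spec}(A)$, which I would recast as monotone functions $f\colon \operatorname{Spec}(A)\to \mathbb{Z}\cup\{\pm\infty\}$. The aisle is exactly the displayed $\mathcal{U}_f$. The description of the coaisle $\mathcal{V}_f$ by the local depth inequalities $\operatorname{depth}_\mathfrak{p}(X)\geq f(\mathfrak{p})$ then follows from the standard characterisation of $\mathbf{R}\Gamma_{V(\mathfrak{p})}$-vanishing. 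For that I would adjust the vanishing of $\mathbf{R}\Hom(k(\mathfrak{p}),X_\mathfrak{p})$ in low degrees, as in the work of Hrbek--Nakamura--\v{S}\v{t}ov\'i\v{c}ek.

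\textbf{Step 2: cosilting equals compactly generated and nondegenerate.} Next I need that cosilting t-structures are precisely the compactly generated t-structures which are nondegenerate and intermediate, i.e. $\mathbf{D}^{\leq -N}\subseteq\mathcal{U}\subseteq \mathbf{D}^{\leq M}$ for some $N,M$.
\begin{itemize}
\item One direction holds over any ring: a compactly generated intermediate t-structure is cosilting, by the cofinite-type/pure-injectivity results (Laking, Marks--Vit\'oria).
\item The converse, that every cosilting complex over $A$ is of cofinite type, is the genuinely nontrivial input. I would cite Hrbek--Nakamura--\v{S}\v{t}ov\'i\v{c}ek. Their argument reduces to residue fields via Neeman's classification \cite{nee11}: a cosilting t-structure is determined by which $k(\mathfrak{p})[i]$ lie in its aisle.
\end{itemize}
Intermediacy translates into $f$ being $\mathbb{Z}$-valued and bounded. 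Boundedness above is the stated hypothesis. Boundedness below is automatic, since $f(\mathfrak{p})\geq \min f(\mathfrak{p}_j)$ over the finitely many minimal primes $\mathfrak{p}_j$. Equivalence of cosilting complexes is by definition equality of t-structures, so this gives the first bijection.

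\textbf{Step 3: passing to silting complexes.} Cofinite type means $\mathcal{U}_f$ is generated by compacts. The corresponding silting complex $\Sigma$ has aisle $\Sigma^{\perp_{>0}}$ generated by the same compacts, and its character dual $\Sigma^+$ cosilting-represents $(\mathcal{U}_f,\mathcal{V}_f)$ by \cite[Prop.~3.10]{ma18}-type duality. This yields the silting--cosilting bijection.

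\textbf{Step 4: the two refinements.}
\begin{itemize}
\item \emph{$(n+1)$-term complexes.} Being $(n+1)$-term cosilting amounts to $\mathbf{D}^{<0}\subseteq\mathcal{U}_f\subseteq \mathbf{D}^{<n}$ (up to the shift convention matching $\CopresE{n}$). Testing on $k(\mathfrak{p})[-i]$ turns this into $0\leq f(\mathfrak{p})\leq n$.
\item \emph{$n$-tilting modules.} I would invoke the Angeleri H\"ugel--Posp\'i\v{s}il--\v{S}\v{t}ov\'i\v{c}ek--Trlifaj classification of $n$-tilting classes over commutative noetherian rings by characteristic sequences. The condition that each term of the sequence avoids primes of small depth, which is Bass's criterion that $\Ext^i(k(\mathfrak{p}),A_\mathfrak{p})=0$ for $i<\operatorname{depth}A_\mathfrak{p}$, becomes $f(\mathfrak{p})\leq \operatorname{depth}(A_\mathfrak{p})$. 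It then remains to check that the tilting t-structure of an $n$-tilting module coincides with $(\mathcal{U}_f,\mathcal{V}_f)$ under this dictionary.
\end{itemize}

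\textbf{Main obstacle.} I expect the main obstacle to be this last identification, together with matching all shift and indexing conventions between the cited sources. The cofinite-type statement of Step 2, though deep, can simply be quoted.
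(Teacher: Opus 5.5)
Your overall route matches the paper's, which proves this statement purely by citation. The paper uses \cite[Theorem 3.8]{ahh21} (packaging the Alonso--Jerem\'ias--Saor\'in classification) for the main bijection and the first bullet, \cite{hn21} for cofinite type, and \cite[Remark 5.11]{hns24} for the tilting bullet. Your Steps 1, 2 and 4 reproduce this assembly correctly. In particular, your check that $\mathbf{D}^{<0}\subseteq\mathcal{U}_f\subseteq\mathbf{D}^{<n}$ is equivalent to $0\le f\le n$ is right. So is your Bass-number translation of the APST condition into $f(\mathfrak p)\le\operatorname{depth}A_\mathfrak p$, with $Y_i=\{f\ge i\}$. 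A minor point: the cofinite-type theorem is due to Hrbek--Nakamura \cite{hn21}, not Hrbek--Nakamura--\v{S}\v{t}ov\'i\v{c}ek.

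Step 3, however, rests on a false claim. A silting aisle $\Sigma^{\perp_{>0}}$ is closed under products, but a compactly generated aisle such as $\mathcal{U}_f$ need not be. Take a one-dimensional local domain $A$ with fraction field $Q$, and set $f(\mathfrak m)=1$, $f(0)=0$. Then $\mathcal{U}_f=\{X\in\mathbf{D}^{\le 0}\mid H^0(X)\text{ torsion}\}$, and $\prod_n A/\mathfrak m^n\notin\mathcal{U}_f$. So no silting complex has its aisle generated by the compacts generating $\mathcal{U}_f$.

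In this example the relevant silting complex is a projective resolution of the tilting module $Q\oplus Q/A$. Its aisle is $\{X\in\mathbf{D}^{\le 0}\mid H^0(X)\text{ divisible}\}$. This aisle contains $Q$, yet by Nakayama every compact object in it lies in $\mathbf{D}^{\le -1}$. Finite type of silting complexes means the aisle is \emph{cut out} as $\mathcal{S}^{\perp_{>0}}$ by a set of compacts, not generated by them.

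What Step 3 actually needs is the ring-independent theorem that $\Sigma\mapsto\Sigma^+$ induces a bijection between silting complexes and cosilting complexes of cofinite type, up to equivalence. Combined with \cite{hn21}, this is exactly the paper's ``and thus''. Also note that \cite[Proposition 3.10]{ma18} is the pure-injectivity of cosilting complexes, not this duality.
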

\begin{proof}
  The main bijection follows from the classification result {\cite[Theorem 3.8]{ahh21}}, see also the exposition in \cite[\S 3]{v23} and \cite[\S 3]{hns24}. In particular, the statement about cofinite type was proved in \cite{hn21}. The first bullet claim follows from the proof of \cite[Theorem 3.8]{ahh21}, while the second is discussed in \cite[Remark 5.11]{hns24}.
\end{proof}
The following provides a natural generalization of \cite[Corollary 5.12]{pv21}, which constitutes the $n=1$ case.
\begin{theorem}\label{cotilting-complex}
  Let $\Omega$ be a cosilting complex which represents an $n$-cosilting module. Then $\Omega$ is a cotilting complex.
\end{theorem}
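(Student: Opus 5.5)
We have to show $\Omega \in {}^{\perp_{<0}}\Omega$, that is, $\Hom_{\mathbf{D}(A)}(\Omega,\Omega[i])=0$ for all $i<0$. The plan is to reduce this, using the special shape of $\Omega$, to a support condition on its top cohomology, and then check that condition prime by prime with the classification of Theorem~\ref{classification-noeth}.

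\textbf{Step 1: reduce to an $\Ext$-vanishing.} Since $\Omega \in \CopresE{n}$, its only cohomology is $C=H^0(\Omega)$ and $K:=H^n(\Omega)$. Soft truncation gives a triangle
$$C[0] \to \Omega \to K[-n] \xrightarrow{+}.$$
Apply $\Hom_{\mathbf{D}(A)}(-,\Omega[i])$ with $i<0$. Because $\Omega$ is a complex of injectives in degrees $\geq 0$, we have $\Hom_{\mathbf{D}(A)}(C,\Omega[i])=0$ for $i<0$. Hence it suffices to prove
$$\Hom_{\mathbf{D}(A)}(K,\Omega[j])=0 \quad\text{for } j=n+i \text{ with } -n\le i<0.$$
For $i<-n$ everything vanishes trivially, so only $0\le j<n$ matters. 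Now apply the same triangle in the second variable. The term $\Hom_{\mathbf{D}(A)}(K,K[j-n])$ vanishes for $j<n$, so the condition reduces further to
$$\Ext^j_A(K,C)=0 \quad\text{for } 0\le j<n.$$

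\textbf{Step 2: translate into the t-structure language.} Let $f$ be the perversity attached to $\Omega$ in Theorem~\ref{classification-noeth}; since $\Omega$ is $(n+1)$-term, $0\le f\le n$. Membership $Y\in{}^{\perp_{\le 0}}\Omega=\mathcal{V}_f$ and $Y\in\mathcal U_f={}^{\perp_{>0}}\Omega$ is tested by supports of cohomology and by local depths. I will show directly that $K[-j]\in{}^{\perp_{\le 0}}\Omega$ for $0\le j<n$, or equivalently that $\Hom_{\mathbf{D}(A)}(K,\Omega[j])=0$ in that range. Up to the index conventions of $\mathcal U_f$, which I will pin down carefully, this should amount to the support condition
$$\operatorname{Supp}(K)\subseteq \{\pp \mid f(\pp)=n\}.$$

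\textbf{Step 3 (main obstacle): prove the support condition locally.} Fix $\pp$ with $f(\pp)<n$; the aim is $K_\pp=0$. Localization preserves injectivity of the components, exactness in intermediate degrees, and the cosilting property (with perversity $f$ restricted to $\operatorname{Spec}(A_\pp)$). Moreover, $B_\Omega$ and $\mathrm{Copres}^{n-1}(C)$ localize compatibly, so $\Omega_\pp$ still represents an $n$-cosilting module. Thus we may assume $A$ is local with maximal ideal $\mathfrak m=\pp$ and $m:=f(\mathfrak m)<n$.

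In this local situation I would compare two computations.
\begin{itemize}
\item[(a)] Since $\Omega$ is a cosilting complex, $\Omega\in\mathcal V_f$, so $H^i_{\mathfrak m}(\Omega)=0$ for $i<m$.
\item[(b)] Since $\Omega\in{}^{\perp_{>0}}\Omega=\mathcal U_f$, we get cohomological support bounds in each degree.
\end{itemize}
Because $\Omega$ is a complex of injectives, $\mathbf{R}\Gamma_{\mathfrak m}(\Omega)=\Gamma_{\mathfrak m}(\Omega)$ is computed termwise. Combining this with the two-cohomology shape of $\Omega$, the $\mathcal U_f$ bound in degree $n$ should force $\mathfrak m\notin\operatorname{Supp}(K)$ whenever $f(\mathfrak m)\le n-1$, by an induction over primes ordered by specialization using monotonicity of $f$.

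I expect this to be the main difficulty: getting the degree conventions exactly right, and ensuring that the $n$-cosilting hypothesis ($\mathcal B_\Omega=\mathrm{Copres}^{n-1}(C)$, via Lemma~\ref{key lem}) is genuinely used. If the bare $\mathcal U_f$ bound is not sharp enough, I would fall back on that hypothesis. Concretely, show that $K\otimes$ a residue field, or $\Gamma_{\mathfrak m}$ of the relevant injective hulls, lies in $\mathcal B_\Omega$. It would then be cogenerated by $C$ in $n$ steps, which contradicts $H^0$-vanishing unless $K_{\mathfrak m}=0$.

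Once the support condition holds, Step 1 gives $\Hom_{\mathbf{D}(A)}(\Omega,\Omega[i])=0$ for all $i<0$, so $\Omega$ is a cotilting complex.
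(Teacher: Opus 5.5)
Your Steps 1 and 2 arrive at the right target, the same one the paper checks. Since $\Omega\in\CopresE{n}$, being cotilting amounts to $\operatorname{Supp}(K)\subseteq V_{n-1}=\{\pp\mid f(\pp)=n\}$ for $K=H^n(\Omega)$. The paper reads this directly off ${}^{\perp_{<0}}\Omega=\mathcal U_f[-1]$, so your $\Ext$ reduction is not needed.

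\textbf{The gap is Step 3, which carries the whole content of the theorem.} There you use the single fact $\Omega\in{}^{\perp_{>0}}\Omega$ twice: once as (a) $\Omega\in\mathcal V_f$, and once as (b) $\Omega\in\mathcal U_f$. Only (a) is correct. The coaisle is ${}^{\perp_{>0}}\Omega=\mathcal V_f$ (the depth condition), and the aisle is ${}^{\perp_{\le 0}}\Omega=\mathcal U_f$ (the support condition). The pair displayed in Theorem~\ref{classification-noeth} is written in the opposite order, but the paper's proof uses this reading. It is also the only consistent one: $\Omega\in\mathcal U_f\cap\mathcal V_f$ would force $\Hom_{\mathbf{D}(A)}(\Omega,\Omega)=0$.

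Concretely, (b) would give $\operatorname{Supp}(K)\subseteq\{f>n\}=\emptyset$, i.e.\ $K=0$ always. This fails in Construction~\ref{constr}, where $H^d(D_{\widehat A})\cong\Hom_A(H^0_{\mathfrak m}(A),E(\mathfrak m))\neq 0$. On the other hand, (a) alone only controls $H^i_{\mathfrak m}(\Omega)$ for $i<f(\mathfrak m)\le n-1$ and says nothing about $H^n$.

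Your fallback via $\mathcal B_\Omega=\mathrm{Copres}^{n-1}(C)$ is not yet an argument, and it aims at the wrong hypothesis. By Lemma~\ref{one-implication} the $n$-cosilting property of $C$ is automatic here. The paper uses only the shape of $\Omega$ together with the depth description of the coaisle.

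\textbf{The missing idea} is to test the coaisle against a module built from the top term of $\Omega$, not against $\Omega$ itself.
\begin{enumerate}
\item Write $\Omega$ as $E_0\to\cdots\to E_{n-1}\xrightarrow{\varphi}E_n$. Split $E_n=E\oplus E'$ with $\operatorname{Supp}(E)\subseteq V_{n-1}$ and $\operatorname{Ass}(E')\cap V_{n-1}=\emptyset$.
\item Then $\operatorname{depth}_{\mathfrak q}(E'[-n+1])$ is $\infty$ for $\mathfrak q\in V_{n-1}$, and is at least $n-1\ge f(\mathfrak q)$ for $\mathfrak q\notin V_{n-1}$. Hence $E'[-n+1]\in\mathcal V_f={}^{\perp_{>0}}\Omega$, i.e.\ $\Hom_{\mathbf{D}(A)}(E',\Omega[n])=0$.
\item Since $\Omega$ consists of injectives, this group is the cokernel of $\Hom_A(E',E_{n-1})\to\Hom_A(E',E_n)$. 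So the inclusion $E'\hookrightarrow E_n$ factors through $\varphi$.
\item Therefore $K=\coker\varphi$ is a quotient of $E$, and $\operatorname{Supp}(K)\subseteq V_{n-1}$.
\end{enumerate}

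In your localized picture, $f\le n-1$ on all of $\operatorname{Spec}(A_\pp)$, so all of $E_n$ plays the role of $E'$ and $\varphi$ is a split epimorphism. The global argument is still preferable. It spares you from justifying that $\Omega\otimes_A A_\pp$ is cosilting for $f_\pp$, which your proposal asserts without proof; the localized cosilting object the paper uses is $\mathbf{R}\Hom_A(A_\pp,\Omega)$.
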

\begin{proof}
  The goal is to show that $\Omega \in {}^{\perp_{< 0}}\Omega$ where, by Theorem~\ref{classification-noeth}, there is a monotone perversity $f: \operatorname{Spec}(A) \to \mathbb{Z}$ such that
  $${}^{\perp_{< 0}}\Omega= \mathcal{U}_f[-1] = \{X \in \mathbf{D}(A) \mid \operatorname{Supp}(H^i(X)) \subseteq \{\mathfrak{p} \in \operatorname{Spec}(A) \mid f(\mathfrak{p})\geq i\} ~\forall i \in \mathbb{Z}\}.$$
  For each $-1 \leq i <n$, let $V_i = \{\mathfrak{p} \in \operatorname{Spec}(A) \mid f(\mathfrak{p})>i\}$, a subset of $\operatorname{Spec}(A)$ closed under specialization, note that $V_{-1} = \operatorname{Spec}(A)$. We then need to show
  $$\operatorname{Supp}(H^i(\Omega)) \subseteq V_{i-1}$$
  for each $0<i\leq n$. Since $\Omega$ represents an $n$-cosilting module, we have $H^i(\Omega) = 0$ for $0<i<n$, and thus the only thing required is to show $\operatorname{Supp}(K) \subseteq V_{n-1}$, where $K = H^n(\Omega)$. Let us write $\Omega$ as a complex $E_0 \to E_1 \to \cdots E_n$ of injective $A$-modules in degrees $0,\ldots,n$, so that we have an exact sequence $E_{n-1} \xrightarrow{\varphi} E_n \to K \to 0$. Because $A$ is commutative noetherian, the injective $A$-module $E_n$ splits as $E_n \cong E \oplus E'$ in a way that $\operatorname{Supp}(E) \subseteq V_{n-1}$ and $\operatorname{Ass}(E') \cap V_{n-1} = \emptyset$. Since $\operatorname{depth}_\mathfrak{q}(E') = \infty$ for any $\mathfrak{q} \in V_{n-1}$, we clearly have $E'[-n+1] \in {}^{\perp_{>0}}\Omega$, where again by Theorem~\ref{classification-noeth}
  $${}^{\perp_{>0}}\Omega = \mathcal{V}_f = \{X \in \mathbf{D}(A) \mid \operatorname{depth}_{\mathfrak{p}}(X)\geq f(\mathfrak{p}) ~\forall \mathfrak{p} \in \operatorname{Spec}(A)\}.$$
  It follows that $\Hom_A(E',\varphi): \Hom_A(E',E_{n-1}) \to \Hom_A(E',E_{n})$ is surjective. Thus, the identity morphism on $E'$ splits off of $\varphi$ as a direct summand. In particular, $K$ is a homomorphic image of $E$, and thus $\operatorname{Supp}(K) \subseteq V_{n-1}$, as desired.
\end{proof}
\begin{corollary}
  Let $\Sigma$ be a silting complex which represents an $n$-silting module. Then $\Sigma$ is tilting.
\end{corollary}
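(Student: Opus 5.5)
Our plan is to reduce to Theorem~\ref{cotilting-complex} by character duality. By Theorem~\ref{classification-noeth}, every silting complex $\Sigma$ over the commutative noetherian ring $A$ has a dual cosilting complex $\Omega=\Sigma^+=\mathbf{R}\Hom_{\mathbb{Z}}(\Sigma,\mathbb{Q}/\mathbb{Z})$. The two t-structures correspond to the same monotone perversity $f$, and $\Sigma$ is $(n+1)$-term exactly when $\Omega$ is.

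The first step is to check that $\Omega$ represents an $n$-cosilting module. Write $\Sigma$ as $P_n\to\cdots\to P_0$ in degrees $-n,\dots,0$. Then $\Sigma^+$ is the complex $P_0^+\to\cdots\to P_n^+$ in degrees $0,\dots,n$. Each $P_i^+$ is injective, because $P_i$ is a summand of a free module and $(A^{(J)})^+\cong (A^+)^J$ is injective. Since $\mathbb{Q}/\mathbb{Z}$ is an injective cogenerator of abelian groups, we have $H^i(\Sigma^+)\cong H^{-i}(\Sigma)^+$. In particular the intermediate cohomology of $\Sigma^+$ vanishes, so $\Omega\in\CopresE{n}$. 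As $\Omega$ is an $(n+1)$-term cosilting complex, Lemma~\ref{one-implication}(ii) shows that $H^0(\Omega)$ is $n$-cosilting and that $\Omega$ represents it. Theorem~\ref{cotilting-complex} then says that $\Omega$ is a cotilting complex.

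It remains to transfer the tilting property back to $\Sigma$. We could use the general fact that $\Sigma$ is tilting iff $\Sigma^+$ is cotilting, but we would rather avoid citing it and argue directly with perversities, mimicking the proof of Theorem~\ref{cotilting-complex}. The silting t-structure of $\Sigma$ is determined by $f$, with aisle $\Sigma^{\perp_{>0}}$ described through supports and depths compatible with $\mathcal{U}_f,\mathcal{V}_f$. The tilting condition $\Sigma\in\Sigma^{\perp_{<0}}$ amounts to support or depth conditions on the cohomology of $\Sigma$, and only $H^{-n}(\Sigma)$ is relevant. Via $H^{-n}(\Sigma)^+\cong H^n(\Omega)$, the support condition $\operatorname{Supp}(H^n(\Omega))\subseteq V_{n-1}$ established in the proof of Theorem~\ref{cotilting-complex} should translate into the required condition on $H^{-n}(\Sigma)$.

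The main obstacle is this last translation. The description of $\Sigma^{\perp_{\le 0}}$ is not stated in the paper, so we must spell out how the tilting condition reads on the silting side. Support does not behave simply under character duals (only Tor-type conditions transfer cleanly). We would first try to argue purely formally. Duality gives $\Hom_{\mathbf{D}(A)}(X,\Sigma^+[i])\cong\Hom_{\mathbf{D}(A)}(\Sigma,X^+[i])$, hence $X\in{}^{\perp_{<0}}\Omega$ iff $X^+\in\Sigma^{\perp_{<0}}$. Since the silting t-structure is cogenerated by pure-injectives of the form $X^+$, and $\Sigma^{++}$ contains $\Sigma$ as a pure subobject, the inclusion $\Omega\in{}^{\perp_{<0}}\Omega$ would yield $\Sigma^{++}\in\Sigma^{\perp_{<0}}$. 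We then hope to deduce $\Sigma\in\Sigma^{\perp_{<0}}$, since $\Sigma^{\perp_{<0}}$ is a definable coaisle, being a shift of a coaisle of a compactly generated t-structure by the finite-type results, and definable classes are closed under pure subobjects.
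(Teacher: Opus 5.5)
Your first half is the same as the paper's proof. $\Sigma^+$ has injective components and $H^i(\Sigma^+)\cong H^{-i}(\Sigma)^+$, so it is an $(n+1)$-term cosilting complex representing an $n$-cosilting module, and it is cotilting by Theorem~\ref{cotilting-complex}. The paper then finishes in one line by citing \cite[Corollary 3.5]{h24}. That is exactly the ``general fact'' you chose not to use ($\Sigma$ is tilting iff $\Sigma^+$ is cotilting). The argument you put in its place has a genuine gap.

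Your duality computation is correct and does give $\Sigma^{++}\in\Sigma^{\perp_{<0}}$. The gap is the passage to $\Sigma\in\Sigma^{\perp_{<0}}$, which you justify by claiming that $\Sigma^{\perp_{<0}}=\Sigma^{\perp_{\le 0}}[1]$ is definable. That claim is false.

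The finite type result of \cite{ma18} says the silting \emph{aisle} $\Sigma^{\perp_{>0}}$ equals $\mathcal{S}^{\perp_{>0}}$ for a set $\mathcal{S}$ of compact objects. So it is the aisle that is definable, not the coaisle. The compactly generated t-structure with definable coaisle in this setting is the cosilting t-structure $(\mathcal{U}_f,\mathcal{V}_f)$ of $\Sigma^+$, which is a different t-structure.

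Here is a concrete failure within the hypotheses of the corollary. Take $A=\mathbb{Z}$, $n=1$, and let $\Sigma$ be a projective resolution of the $1$-tilting module $T=\mathbb{Q}\oplus\mathbb{Q}/\mathbb{Z}$.
\begin{itemize}
\item A module $M$ lies in $\Sigma^{\perp_{\le 0}}$ iff $\Hom_{\mathbb{Z}}(T,M)=0$, i.e.\ iff $M$ is reduced.
\item $\mathbb{Q}=\varinjlim(\mathbb{Z}\xrightarrow{2}\mathbb{Z}\xrightarrow{3}\mathbb{Z}\to\cdots)$ is a direct limit of reduced groups but is not reduced.
\item Hence the coaisle is not closed under directed homotopy colimits, so it is not definable.
\end{itemize}

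Without definability, the pure monomorphism $\Sigma\to\Sigma^{++}$ gives you very little, because $\Sigma$ is not compact. It only shows that every morphism $\Sigma\to\Sigma[i]$ with $i<0$ factors through a phantom map, hence is phantom. Proving that these phantoms vanish is exactly the nontrivial content, and nothing in your sketch does it. Your perversity-based alternative is also not carried out, as you acknowledge.

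To complete the proof, either supply an actual argument for this last step or invoke \cite[Corollary 3.5]{h24}, as the paper does.
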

\begin{proof}
  Clearly, the indermediate cohomology of the cosilting complex $\Sigma^+$ vanishes and thus $\Sigma^+$ represents an $n$-cosilting $A$-module. It follows that $\Sigma^+$ is a cotilting complex by Theorem~\ref{cotilting-complex}. Then $\Sigma$ is tilting by \cite[Corollary 3.5]{h24}.
\end{proof}
\begin{remark}
  In fact, \cite[Theorem 3.4]{h24} shows more: If $\Sigma$ is a silting complex over $A$ which represents an $n$-silting module then $\Sigma$ is a \textit{decent} tilting complex. This means that the heart of the silting t-structure is equivalent to the category of contramodules over the endomorphism ring of $\Sigma$ endowed with certain natural linear topology. Note that this category of contramodules, as well as the dual-analogous category of discrete modules, is then derived equivalent to $\mathrm{Mod}(A)$. See \cite{h24} for details.
\end{remark}
\subsection{Examples over local rings with vanishing intermediate cohomology}
We continue with a particular source examples of $n$-(co)silting modules over a local commutative noetherian ring $A$ which are neither $n$-(co)tilting modules nor $1$-(co)silting modules, in general.

\begin{definition}
Let $(A,\mathfrak{m},k)$ be a local ring of Krull dimension $d = \operatorname{dim}(A)>0$ and depth $s = \operatorname{depth}(A)$. We say that $A$ has \textit{vanishing intermediate local cohomology} if $H^i_\mathfrak{m}(A) = 0$ for all $s<i<d$.
\end{definition}
By a construction due to Sharp \cite[Proposition 1.4]{sha75}, such rings exist for any choice of non-negative integers $s \leq d$. An explicit example for $s=0$ and $d = 2$ is $A = k[[x,y,z]]/(x^2,xy,xz)$. We will be particularly interested in the case $s=0$, that is, the case of $A$ being of zero depth. Note that in this case, in view of Theorem~\ref{classification-noeth}, the only $n$-tilting module (resp. $n$-cotilting module) is, up to equivalence, the projective generator (resp. the injective cogenerator). However, we shall show that $n$-silting modules are much more abundant over such $A$. Furthermore, such $A$ is what is called a \textit{generalized Cohen--Macaulay} local ring, which means by definition that $H^i_\mathfrak{m}(A)$ is of finite length for all $i<d$. This is because $H^0_\mathfrak{m}(A)$ is always finite length, being a submodule of $A$, and the intermediate local cohomology modules $H^i_\mathfrak{m}(A)$ for $0<i<d$ vanish by the assumption. It follows by \cite[Theorem 37.4]{hio12} that $A$ is Cohen--Macaulay on the punctured spectrum, that is, $A_\mathfrak{p}$ is Cohen--Macaulay for any non-maximal prime ideal $\mathfrak{p}$.
\begin{example}\label{E:e1}
   Let $(A,\mathfrak{m},k)$ be a local ring of dimension $d>0$, zero depth, and with vanishing intermediate cohomology. Let $f = \operatorname{ht}$ be the height function on $\operatorname{Spec}(A)$ (defined as $\operatorname{ht(\mathfrak{p}) = \dim(A_\mathfrak{p})}$ for all $\mathfrak{p} \in \operatorname{Spec}(A)$). Let 
   $$\Sigma = \bigoplus_{\mathfrak{p} \in \operatorname{Spec}(A)}\mathbf{R}\Gamma_{V(\mathfrak{p})}(A_\mathfrak{p})[\operatorname{ht}(\mathfrak{p})]$$ 
   be the corresponding silting complex as constructed in \cite[\S 4]{hns24}. We claim that $\Sigma$ represents a $d$-silting module. For any non-maximal prime ideal $\mathfrak{p}$, the local cohomology $\mathbf{R}\Gamma_{V(\mathfrak{p})}(A_\mathfrak{p})[\operatorname{ht}(\mathfrak{p})]$ is quasi-isomorphic to a module in degree zero, this reflects $A_\mathfrak{p}$ being Cohen--Macaulay, see above. Then $H^i(\Sigma) = 0$ for all $-d<i<0$. Since $\Sigma$ is a $(d+1)$-term silting complex by \cite[\S 4]{hns24}, we see that $\Sigma$ represents a $d$-silting module. Since the monotone perversity, the function $\operatorname{ht}$ is not constant zero and thus is not bounded by depth, so that $\Sigma$ does not represent an $n$-tilting module for any $n \geq 0$. For any choice $d>1$, the monotone perversity $\operatorname{ht}$ exceeds $1$ in value, and thus $\Sigma$ is not a $2$-term silting complex, and thus does not represent a $1$-silting module.

   As per \cite[\S 5]{hns24}, note that the dual cosilting complex $\Sigma^+$ is equivalent to $\prod_{\mathfrak{p} \in \operatorname{Spec}(A)}D_{\widehat{R_\mathfrak{p}}}$, where $D_{\widehat{R_\mathfrak{p}}}$ is the dualizing complex over the $\mathfrak{p}R_{\mathfrak{p}}$-adic completion of the local ring $R_\mathfrak{p}$. Since for any non-maximal prime $\mathfrak{p}$ the local ring $R_\mathfrak{p}$ is Cohen--Macaulay, $D_{\widehat{R_\mathfrak{p}}}$ is isomorphic to the canonical module over the local ring $\widehat{R_\mathfrak{p}}$.
\end{example}

\subsection{More general construction}
The following relation between cosilting t-structures and localizations at prime ideals can be found in \cite{hhz24}. Let $f: \operatorname{Spec}(A) \to \mathbb{Z}$ be any bounded monotone perversity. For any $\mathfrak{p}$, consider $\operatorname{Spec}(A_\mathfrak{p})$ naturally as a subspace of $\operatorname{Spec}(A)$, and let $f_\mathfrak{p}: \operatorname{Spec}(A_\mathfrak{p}) \to \mathbb{Z}$ be the monotone perversity obtained by restricting $f$. Let $\Omega$ be the cosilting complex corresponding to $f$ via Theorem~\ref{classification-noeth} inducing the cosilting t-structure $(\mathcal{U},\mathcal{V})$. Then the cosilting t-structure induced by $f_\mathfrak{p}$ is of the form $(\mathcal{U}_\mathfrak{p},\mathcal{V}_\mathfrak{p})$, where
$$\mathcal{U}_\mathfrak{p} = \mathcal{U} \cap \mathbf{D}(A_\mathfrak{p}) = \{U_\mathfrak{p} \mid U \in \mathcal{U}\},$$
$$\mathcal{V}_\mathfrak{p} = \mathcal{V} \cap \mathbf{D}(A_\mathfrak{p}) = \{V_\mathfrak{p} \mid V \in \mathcal{V}\},$$
and the corresponding cosilting complex in $\mathbf{D}(A_\mathfrak{p})$ can be represented by $\Omega^\mathfrak{p} := \mathbf{R}\Hom_A(A_\mathfrak{p},\Omega)$.
\begin{construction}\label{constr}
  Let $(A,\mathfrak{m},k)$ be a local ring of dimension $d>0$, zero depth, and with vanishing intermediate cohomology. Let $f: \operatorname{Spec}(A) \to \mathbb{Z}$ be any monotone perversity satisfying the following:
  \begin{itemize}
    \item $f(\mathfrak{p}) \leq \operatorname{ht}(\mathfrak{p})$ for any non-maximal prime ideal $\mathfrak{p}$,
    \item $f(\mathfrak{m}) = d$.
  \end{itemize}
  Denote by $(\mathcal{U},\mathcal{V})$ the corresponding cosilting t-structure. We first construct the corresponding $(d+1)$-term cosilting complex explicitly, by setting
  $$\Omega = \prod_{\mathfrak{p} \in \operatorname{Spec}(A) \setminus \{\mathfrak{m}\}} C(\mathfrak{p}) \oplus D_{\widehat{A}},$$
  where:
  \begin{itemize}
    \item $C(\mathfrak{p})$ is the cotilting $A_\mathfrak{p}$ module corresponding to $f_\mathfrak{p}$,
    \item $D_{\widehat{A}}$ is the dualizing complex over the $\mathfrak{m}$-adic completion of $A$.
  \end{itemize}
  Here, note that since $A_\mathfrak{p}$ is Cohen--Macaulay for any non-maximal prime $\mathfrak{p}$, we have $\operatorname{ht}(\mathfrak{p}) = \operatorname{depth}(A_\mathfrak{p})$ and so the cosilting complex in $\mathbf{D}(A_\mathfrak{p})$ corresponding to $f_\mathfrak{p}$ via Theorem~\ref{classification-noeth} indeed represents a $\operatorname{ht}(\mathfrak{p})$-cotilting module $C(\mathfrak{p})$. Taking the standard injective resolution of $D_{\widehat{A}}$, we know that $\Omega$ is quasi-isomorphic to a complex of injectives concentrated in degrees between $0$ and $d$. Furthermore, $\Omega$ is a cogenerator of $\mathbf{D}(A)$, as it is cosupported on the whole spectrum, recall here from \cite[p. 282, 287]{har66} that $\mathbf{R}\Hom_A(k,D_{\widehat{A}}) \cong k[-d]$.

  Using Lemma~\ref{silting-complex-generate}, it thus suffices to show that $\Omega^\varkappa \in {}^{\perp_{>0}}\Omega$ for any set $\varkappa$. Recall from \cite[\S 5]{hns24} that $\Hom_{\mathbf{D}(A)}(Y,D_{\widehat{A}}[i]) = 0$ if and only if $H_\mathfrak{m}^{d-i}(Y)=0$. Thus,
  $$Y \in {}^{\perp_{>0}}D_{\widehat{A}} \iff \operatorname{depth}_\mathfrak{m}(Y) \geq d = f(\mathfrak{m}).$$
  On the other hand, we have by adjunction
  $$Y \in {}^{\perp_{>0}}C(\mathfrak{p}) \iff Y_\mathfrak{p} \in \mathcal{V}_\mathfrak{p} = \{Y \in \mathbf{D}(A_\mathfrak{p}) \mid \operatorname{depth}_\mathfrak{q}(Y) \leq f(\mathfrak{q})\}.$$

  It thus follows easily that ${}^{\perp_{>0}}\Omega = \mathcal{V}_f$. Clearly, $C(\mathfrak{p}) \in \mathcal{V}_\mathfrak{p} \subseteq \mathcal{V}$ for any prime ideal $\mathfrak{p}$ other than $\mathfrak{m}$. It remains to check that $D_{\widehat{A}} \in \mathcal{V}$, or that $\operatorname{depth} D_{\widehat{A}} \geq d$, but that follows by the aforementioned isomorphism $\mathbf{R}\Hom_A(k,D_{\widehat{A}}) \cong k[-d]$. This shows that $\Omega$ is an $(n+1)$-term cosilting complex. By the assumptions on $A$, we know that $H_\mathfrak{m}^i(A)=0$ for all $0<i<d$, but then also $H^i(D_{\widehat{A}})=0$ for all $0<i<d$, as $D_{\widehat{A}} \cong \Hom_A(\mathbf{R}\Gamma_{V(\mathfrak{m})}(A)[d],E(\mathfrak{m}))$, see e.g. \cite[(5.8)]{hns24}. It follows that $\Omega$ represents a $d$-cosilting module.
\end{construction}

A monotone perversity $f: \operatorname{Spec}(A) \to \mathbb{Z}$ is called \textit{comonotone} if $f(\mathfrak{q}) \leq f(\mathfrak{p}) +1$ whenever $\mathfrak{p} \subsetneq \mathfrak{q}$ are prime ideals such that there is no prime ideal $\mathfrak{o}$ such that $\mathfrak{p} \subsetneq \mathfrak{o} \subsetneq \mathfrak{q}$. These special perversities, under mild conditions (such as the existence of a dualizing complex) on $A$, correspond precisely to t-structures in the bounded derived category $\mathbf{D}^b(\mathrm{mod}(A))$ of finitely generated $A$-modules, see \cite{ab09}, \cite{ajs10}, \cite{tak23}. By \cite[Corollary 6.17]{pv21}, the corresponding cosilting t-structures are cotilting in this situation.

\begin{example}\label{E:e2}
  Construction~\ref{constr} easily produces examples in which the monotone perversity $f$ is not comonotone. For example, let $d>1$ and $f$ be such that $f(\mathfrak{p}) = 0$ for any non-maximal prime ideal $\mathfrak{p}$. The induced cosilting complex $\Omega$ is however still cotilting by Theorem~\ref{cotilting-complex}. The cotilting property however does not follow from the previously known cases of restrictable cosilting t-structures (or comonotone perversities) or cotilting modules.
\end{example}
\subsection{Failure of finite type for $n$-silting modules}\label{counterexample}
  In this subseciton, let 
  $$A=k[[x,y,z]]/(x^2,xy,xz)$$ 
  be the particular example of a ring of depth $0$, dimension $2$, and with vanishing intermediate cohomology $H^1_\mathfrak{m}(A)=0$, where $\mathfrak{m} = (x,y,z)$ is the maximal ideal of $A$. Note that in this specific example, $H^0_\mathfrak{m}(A)$ is isomorphic to the one dimensional vector space $k$ generated by $x$.
  
  Let now $T$ be the $2$-silting module corresponding to the $3$-term tilting complex $\Sigma$, which in turn corresponds to the function $f$ assigning $2$ to $\mathfrak{m}$ and $0$ to all other prime ideals, as per Example~\ref{E:e2}. Let $\mathfrak{p}$ be a non-maximal prime ideal of $A$. Then $\Sigma_\mathfrak{p}$ is isomorphic to a projective $A_\mathfrak{p}$-module in degree $0$ for any non-maximal prime $\mathfrak{p}$, see \cite[Lemma 6.3]{hhz24}. It follows by adjunction that for any $A_\mathfrak{p}$-module $N$ we have $N[i] \in \Sigma^{\perp_{>0}}$ for all $i>0$. In particular, we have $W \in \Sigma^{\perp_{>0}}$, where $W=\prod_{\mathfrak{p} \neq \mathfrak{m}}E(A/\mathfrak{p})$ is the product of isoclasses of indecomposable injective $A$-modules not supported on the closed point $\mathfrak{m}$.
  
  Towards a contradiction, assume that $T$ is of finite type, so that there is $\mathcal{S} \subseteq \PresFP{2}$ such that $\mathcal{D}_\Sigma = \bigcap_{\sigma \in S}\mathcal{D}_\sigma$. Clearly, this implies that there is such $\sigma$ with $H^{-2}(\sigma) \neq 0$. By the above, we have 
  $$W \in \Sigma^{\perp_{>0}} \implies W \in \mathcal{D}_\Sigma \implies W \in \mathcal{D}_\sigma \implies W \in \sigma^{\perp_{>0}}$$
  using Lemma~\ref{key lem}. Then 
  $$0=\Hom_{\mathbf{D}(A)}(\sigma,W[2]) \cong \Hom_A(H^{-2}(\sigma),W),$$ 
  which implies that the localization $H^{-2}(\sigma)_\mathfrak{p}$ at any non-maximal prime $\mathfrak{p}$ vanishes. As $\sigma  \in \PresFP{2}$, the cohomology module $H^{-2}(\sigma)$ is a submodule of a free $A$-module of finite rank, and thus $H^{-2}(\sigma)$ must be isomorphic to a finite-dimensional vector space $k^n$ over $k$. 
  
  Thus, the complex $\sigma$ witnesses tha $k^n$ is a 3rd syzygy of the zero cohomology module $M := H^0(\sigma)$. We claim that this implies that the residue field $k$ itself is a 3rd syzygy (of some finitely generated $A$-module). Note that this is not a trivial fact, as $n$th syzygies are in general not closed under direct summands. In fact, \cite[Theorem A]{addnt24} shows that over $A$, every non-trivial $3$rd syzygy has $k$ as a direct summand. Once the claim is established, the desired contratiction follows directly from \cite[Corollary 2.13]{miller}, as $A$ is manifestly not a Gorenstein ring.
  
  Set $B = A/k$, where $k$ denotes the 1-dimensional socle of $A$ generated by $x$. Following the argument of \cite[Lemma 2.8]{miller}, we have an exact sequence of the form 
  $$0 \to k^n \to A^n \to A^a \to N \to 0,$$
  where $N$ is a (1st) syzygy and the cokernel of the map $k^n \to A^n$ is isomorphic to $B^n$. Taking any one-dimensional subspace $k$ of $k^n$ induces a diagram of the following form
  $$
  \begin{CD}
    0 @>>> k^n @>>> A^n @>>> A^a @>>> N @>>> 0 \\
  &    & @AA\subseteq A @AA\subseteq A @AA = A @AA \pi A \\
    0 @>>> k @>>> A @>>> A^a @>>> L @>>> 0 \\
  \end{CD}
  $$
  The map $\pi$ is the induced projection and its kernel is easily checked using the Snake Lemma to be isomorphic to $B^{n-1}$. It follows that $L$ is an extension of $B^{n-1}$ by $N$, both of which are syzygy modules. By \cite[Theorem B]{gototak}, (1st) syzygy modules are closed under extensions for the ring $A$, so that $L$ itself is a syzygy module, which in turn makes $k$ a 3rd syzygy. Indeed, the assumptions of \textit{loc. cit.} are satisfied, as $A$ is Cohen--Macaulay at any non-maximal prime, $A$ admits a unique minimal prime ideal $(x)$, and the localization $A_{(x)} \cong k[[x,y^{\pm 1},z^{\pm 1}]]/(x^2)$ is an artinian hypersurface, and thus is Gorenstein.

\bigskip

{\bf Acknowledgements.} The first author is supported by the project LQ100192601 Lumina quaeruntur, funded by the Czech Academy of Sciences (RVO 67985840). The second author is supported by the NSF of China (Grant Nos. 12571035 and 12171206) and Jiangsu 333 Project. The third author was supported by the NSF of China (Grant No. 12201223) and the NSF of Fujian Province (No. 2023J05048).
\bigskip

\textbf{Data Availability. }Data sharing not applicable to this article as no datasets were generated or analysed during
the current study.

\bibliographystyle{abbrv}
\bibliography{references}

\end{document}